\numberwithin{equation}{section}
\newtheorem{theorem}{Theorem}[section] 
\newtheorem{lemma}[theorem]{Lemma}     
\newtheorem{corollary}[theorem]{Corollary}
\newtheorem{proposition}[theorem]{Proposition}
\theoremstyle{definition}
\newtheorem{definition}[theorem]{Definition}
\newcommand {\C}{\mathbb C}
\newcommand {\N}{\mathbb N}
\newcommand {\Z}{\mathbb Z}
\newcommand {\B}{\mathcal B}
\newcommand{\rad}{{\rm rad\,}}
\newcommand{\eps}{\varepsilon}
\title{The Radical of the Bidual of a Beurling Algebra}
\author[J.\ T.\  White]{Jared T.\ White}
\address{
Department of  Mathematics and Statistics, Lancaster University, Lancaster LA1 4YF, United Kingdom}
\email{j.white6@lancaster.ac.uk}
\keywords{Banach algebra, Group algebra, Beurling algebra, bidual, Jacobson radical, semisimple}
\date{2017}
\subjclass[2010]{43A20 (primary); 46H10,  16N20 (secondary)}
\begin{document}

\begin{abstract}
We prove that the bidual of a Beurling algebra on $\Z$, considered as a Banach algebra with the first Arens product, can never be semisimple. We then show that $\rad(\ell^{\, 1}(\oplus_{i=1}^\infty \Z)'')$ contains nilpotent elements of every index. Each of these results settles a question of Dales and Lau. Finally we show that there exists a weight $\omega$ on $\Z$ such that the bidual of $\ell^{\, 1}(\Z, \omega)$ contains a radical element which is not nilpotent.
\end{abstract}

\maketitle
\address

\section{Introduction}
\noindent
Let $G$ be a discrete group, and let $\omega$ be a weight on $G$. Let $\ell^{\, 1}(G, \omega)''$ denote the bidual of $\ell^{\, 1}(G, \omega)$, considered as a Banach algebra with the first Arens product `$\Box$'. We shall study the Jacobson radical $\rad(\ell^{\, 1}(G, \omega)'', \Box)$ in this article.
The focus will be on the cases where either $\omega = 1$, in which case we are in fact studying the bidual of the group algebra $\ell^{\, 1}(G)$, or where the weight is non-trivial but $G= \Z$. Our main results will be solutions to two questions posed by Dales and Lau in \cite{DL}.

The study of the radicals of the biduals of Banach algebras goes back at least to Civin and Yood's paper \cite{CY}, where it was shown that if $G$ is either a locally compact, non-discrete, abelian group, or a discrete, soluble, infinite group, then $\rad(L^{\, 1}(G)'') \neq \{ 0 \}$. Civin and Yood's results have since been extended to show that $\rad(L^{\, 1}(G)'')$ is not only non-zero, but non-separable, whenever $G$ is discrete and amenable (\cite{G63}, \cite[7.31(iii)]{P}) or non-discrete \cite{G73}. The study has not been restricted to those Banach algebras coming from abstract harmonic analysis. One particularly striking result is a theorem of Daws and Read \cite{DR} which states that, for $1<p<\infty,$ the algebra $\B(\ell^{\, p})''$ is semisimple if and only if $p=2.$

A study of $\rad(\ell^{\, 1}(G, \omega)'')$ for $G$ a discrete group and $\omega$ a weight on $G$ was undertaken by Dales and Lau in \cite{DL}. In the list of open problems at the end of their memoir the authors ask whether $\ell^{\, 1}(\Z, \omega)''$ can ever be semisimple \cite[Chapter 14, Question 6]{DL}. In Section 3 we shall prove that the answer to this question is negative:

\begin{theorem} \label{1.1}		
Let $\omega$ be a weight on $\Z$. Then $\rad(\ell^{\, 1}(\Z, \omega)'') \neq \{ 0 \}$.
\end{theorem}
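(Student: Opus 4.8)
The plan is to realise $\mathcal{A}'':=\ell^{\,1}(\Z,\omega)''$ concretely and to produce a non-zero quasi-nilpotent element lying in a suitable ideal. Write $\mathcal{A}=\ell^{\,1}(\Z,\omega)$, so that $\mathcal{A}'=\ell^\infty(\Z,1/\omega)$ and $\mathcal{A}''=\ell^\infty(\Z,1/\omega)'$. For $n\in\Z$ let $\delta_n$ be the point mass and put $e_n=\omega(n)^{-1}\delta_n$, so that $\|e_n\|=1$ and
\[
 e_m\Box e_n=\Omega(m,n)\,e_{m+n},\qquad \Omega(m,n):=\frac{\omega(m+n)}{\omega(m)\omega(n)}\in(0,1].
\]
Since the closed unit ball of $\mathcal{A}''$ is weak-$*$ compact, for any free ultrafilter $\mathcal{U}$ on $\N$ and any sequence $(m_i)$ tending to infinity in $\Z$ the weak-$*$ limit $\Phi=\lim_{i\to\mathcal{U}}e_{m_i}$ exists; testing $\Phi$ against $\omega\cdot\1_{\{m_i\}}\in\mathcal{A}'$ shows $\Phi\neq0$. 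Because $\Box$ is weak-$*$ continuous in its first variable and is computed by iterated limits, these ``elements at infinity'' satisfy $\langle\Phi\Box\Psi,\lambda\rangle=\lim_{i\to\mathcal{U}}\lim_{j\to\mathcal{V}}\omega(m_i)^{-1}\omega(n_j)^{-1}\lambda(m_i+n_j)$ whenever $\Psi=\lim_{j\to\mathcal{V}}e_{n_j}$. I would also record that $c_0(\Z,1/\omega)^\perp$ is a weak-$*$ closed ideal of $\mathcal{A}''$ containing all such $\Phi$, so the search for radical elements may be confined to it.

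I would then split according to whether $\mathcal{A}$ is Arens regular, equivalently (by the Young/Craw--Young criterion) whether $\omega$ clusters to zero, i.e. whether $\lim_i\lim_j\Omega(m_i,n_j)=0$ for all sequences tending to infinity through distinct values. In the \emph{regular} case the two Arens products agree, and since $\mathcal{A}$ is commutative $(\mathcal{A}'',\Box)$ is commutative. Taking any injective sequence $m_i\to\infty$, the estimate $|\omega(m_i)^{-1}\omega(m_j)^{-1}\lambda(m_i+m_j)|\le\|\lambda\|\,\Omega(m_i,m_j)$ together with $\lim_i\lim_j\Omega(m_i,m_j)=0$ gives $\Phi\Box\Phi=0$. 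Thus $\Phi$ is a non-zero square-zero element of a commutative Banach algebra, hence quasi-nilpotent, hence $\Phi\in\rad(\mathcal{A}'')$, and the theorem follows in this case.

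The substantial case is the \emph{non-regular} one, where some iterated limit $c=\lim_i\lim_j\Omega(m_i,n_j)$ is strictly positive; this case already contains $\omega=1$, for which the conclusion is the theorem of Civin and Yood. Here square-zero elements need not exist, and the noncommutativity of $\mathcal{A}''$ must be exploited. My approach would be to pass to subsequences so that $\Omega$ stabilises to a strictly positive, asymptotically multiplicative pattern along the chosen indices; the normalised point masses $e_{m_i}$ then generate, at infinity, a weak-$*$ closed subalgebra of $\mathcal{A}''$ that is modelled on the bidual at infinity of an \emph{unweighted} group (or sub-semigroup) algebra on $\Z$. Transporting the Civin--Yood construction through this identification should yield a non-zero radical element of $\mathcal{A}''$.

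The main obstacle is precisely this last step: making the ``asymptotically multiplicative direction'' rigorous. One must choose the sequences, via a careful ultrafilter analysis of $\Omega$, so that the relevant double limits are controlled uniformly, so that the map onto the model subalgebra is genuinely multiplicative, and --- most delicately --- so that an element which is radical in the subalgebra remains radical in all of $\mathcal{A}''$ rather than merely quasi-nilpotent within the subalgebra. Verifying that the quasi-nilpotency survives the off-diagonal products $e_{m_i}\Box e_{n_j}$ with $i\neq j$, whose norms $\omega(m_i)^{-1}\omega(n_j)^{-1}$ need not be small, is where the real work lies.
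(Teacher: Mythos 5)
Your proposal has a genuine gap: the ``non-regular'' case, which you yourself identify as the substantial one (it contains $\omega = 1$ and, after your dichotomy, all the hard weights), is not proved. What you offer there --- passing to subsequences along which $\Omega$ is ``asymptotically multiplicative'', building a model subalgebra, and transporting the Civin--Yood construction --- is an unexecuted plan, and the obstacles you name are real ones. In particular, even if you produced a weak-* closed subalgebra of $\mathcal{A}''$ isomorphic to a corner of $\ell^{\,1}(\Z)''$, an element of the radical of a closed subalgebra need not lie in $\rad(\mathcal{A}'')$: radicality requires $\Psi \Box \Phi$ to be quasi-nilpotent for \emph{every} $\Psi \in \mathcal{A}''$, and since $\mathcal{A}''$ is not commutative in the non-regular case, quasi-nilpotency alone buys nothing. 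The Civin--Yood mechanism gets around this precisely because the difference of two invariant means generates a genuine two-sided ideal of the whole bidual (translation invariance gives absorption by all point masses, hence by all of $\mathcal{A}''$ via weak-* limits), and nothing in your sketch reproduces that ideal structure. Your Arens-regular case, by contrast, is essentially correct: there $\Box = \Diamond$, so $\mathcal{A}''$ is commutative, the $0$-clustering of $\Omega$ forces $\Phi \Box \Phi = 0$ for any weak-* accumulation point $\Phi$ of the normalised point masses, and a non-zero square-zero element of a commutative Banach algebra lies in the radical. (Minor point: to see $\Phi \neq 0$, test against $\omega$ itself, or against $\omega$ restricted to the set $\{m_i\}$; testing against a single atom gives $0$ along a free ultrafilter.)

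The paper avoids your dichotomy entirely, using two ideas absent from your proposal. First, a \emph{rescaling reduction}: with $\rho = \rho_\omega = \lim_n \omega(n)^{1/n}$, the function $\gamma_n = \omega_n/\rho^n$ is again a weight, and $(f(n)) \mapsto (\rho^n f(n))$ is an isometric algebra isomorphism $\ell^{\,1}(\Z,\gamma) \rightarrow \ell^{\,1}(\Z,\omega)$; so one may assume $\rho_\omega = 1$. Second, for such a weight one constructs \emph{invariant means} on $\ell^{\,\infty}(\Z,1/\omega)$ directly, as weak-* ultrafilter limits of the weighted Ces\`aro averages $\Lambda_k = \frac{1}{C_k}(\delta_0 + \cdots + \delta_{n_k})$, where $C_k = \omega_0 + \cdots + \omega_{n_k}$ and $(n_k)$ is chosen (using $\inf_n \omega_n^{1/n} = 1$) so that $\omega_{n_k}/C_k \rightarrow 0$; invariance follows because $\|\delta_1 * \Lambda_k - \Lambda_k\|$-type pairings are controlled by $\omega_{n_k}/C_k$. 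One then shows $(\Lambda_k)$ is weak-* divergent, takes limits along two different ultrafilters to obtain distinct invariant means $\Lambda \neq M$ with $\langle \Lambda, 1 \rangle = \langle M, 1 \rangle$, and observes that $\Lambda - M$ lies in the Dales--Lau ideal $I_\omega = \{ \Xi : \delta_n \Box \Xi = \Xi \ (n \in \Z), \ \langle \Xi, 1 \rangle = 0 \}$, which satisfies $I_\omega^{\Box 2} = \{0\}$ and hence sits inside the radical. This works uniformly in $\omega$, with no appeal to Arens regularity, and it is exactly the ideal structure of $I_\omega$ that supplies what your non-regular case is missing.
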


A key observation of Civin and Yood (see \cite[Theorem 3.1]{CY}) is that, for an amenable group $G$, the difference of any two invariant means on $\ell^{\, \infty}(G)$ always belongs to the radical of $\ell^{\, 1}(G)'',$ and this idea is what lies behind many of the subsequent results mentioned above. Dales and Lau developed a weighted version of this argument in \cite[Theorem 8.27]{DL}, and invariant means are also at the centre of our proof of Theorem \ref{1.1}.

In each of the works \cite{CY}, \cite{G63} and \cite{G73}, whenever an element of the radical of the bidual of some group algebra is constructed it is nilpotent of index 2. This is an artifact of the  method of invariant means. Moreover, it follows from \cite[Proposition 2.16]{DL} and 
\cite[Theorem 8.11]{DL} that, for a discrete group $G$, if $\omega$ is a weight on $G$ such that 
$\ell^{\, 1}(G, \omega)$ is semisimple and Arens regular, then $\rad(\ell^{\, 1}(G, \omega)'')^{\Box 2} = \{ 0 \}$. To see that this  is a large class of examples consider \cite[Theorem 7.13]{DL} and \cite[Theorem 8.11]{DL}. In \cite[Chapter 14, Question 3]{DL}, Dales and Lau ask, amongst other things, whether or not we always have $\rad(L^{\, 1}(G)'')^{\Box 2} = \{ 0 \},$ for $G$ a locally compact group. It also seems that until now it was not known whether or not $\rad( L^{\, 1}(G, \omega)'')$ is always nilpotent, for $G$ a locally compact group and $\omega$ a weight on $G$, although there is an example of a weight on $\Z$ in \cite[Example 9.15]{DL} for which this radical cubes to zero, but has non-zero square. In Section 4 we shall answer both of these questions in the negative by proving the following:

\begin{theorem} \label{1.3}
Let $G= \oplus_{i=1}^\infty \Z$. Then $\rad(\ell^{\, 1}(G)'')$ contains nilpotent elements of every index.
\end{theorem}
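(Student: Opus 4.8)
The plan is to exploit the rich subgroup structure of $G=\oplus_{i=1}^\infty\Z$ to manufacture, for each $n$, a family of radical elements whose $\Box$-multiplication table is strictly triangular, and then to assemble them into a single element of the prescribed index. The organizing device is an internal decomposition $G=G_1\oplus\cdots\oplus G_n$, obtained by partitioning the coordinate set $\{1,2,\dots\}$ into $n$ infinite pieces, so that each block $G_k$ is again a copy of $\oplus_{i=1}^\infty\Z$ and in particular is an infinite amenable group. My goal is to produce invariant means $M,M_1,\dots,M_n$ on $\ell^{\,\infty}(G)$ such that the differences $\mu_k:=M-M_k$ satisfy $\mu_i\Box\mu_j=0$ whenever $i\ge j$, while the fully ordered product $\mu_1\Box\cdots\Box\mu_n$ is nonzero; granting this, I will show that $x=\mu_1+\cdots+\mu_n$ is nilpotent of index $n+1$, and letting $n$ vary realises every nilpotency index.

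The building blocks are cheap to obtain and to control. I take $M$ to be a fixed invariant mean on $\ell^{\,\infty}(G)$, and I take each $M_k$ to be a second invariant mean chosen to agree with $M$ except in its behaviour along the block $G_k$. By the Civin--Yood argument each difference of two invariant means lies in $\rad(\ell^{\,1}(G)'')$ and squares to zero under $\Box$; hence each $\mu_k\ne 0$ satisfies $\mu_k\in\rad(\ell^{\,1}(G)'')$ and $\mu_k^{\Box 2}=0$. Since the radical is a closed ideal, $x=\sum_{k}\mu_k$ automatically lies in $\rad(\ell^{\,1}(G)'')$, so the entire content of the theorem is the algebraic structure of the products among the $\mu_k$.

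The heart of the matter is the computation of the cross products $\mu_i\Box\mu_j$ for $i\ne j$. Writing each mean as a weak* limit of probability measures on $G$ and using the iterated-limit description of the first Arens product, $\langle\mu_i\Box\mu_j,f\rangle$ is an \emph{iterated} integration of a function $f\in\ell^{\,\infty}(G)$, first against $\mu_j$ in the $G_j$-coordinates and then against $\mu_i$ in the $G_i$-coordinates. Because Fubini's theorem fails for finitely additive means, this value genuinely depends on the order of integration; this is precisely the phenomenon that escapes the index-$2$ barrier caused by the symmetric use of invariant means described in the introduction. The idea is to choose the $M_k$ so that the relevant masses on suitable half-space sets are driven to the extreme values $0$ or $1$, making the resulting table strictly triangular, that is $\mu_i\Box\mu_j=0$ for $i\ge j$. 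Granting this, associativity of $\Box$ annihilates any product containing an adjacent repeat, while triangularity forces every surviving product of the $\mu_k$ to have strictly increasing indices; consequently $x^{\Box m}=\sum_{i_1<\cdots<i_m}\mu_{i_1}\Box\cdots\Box\mu_{i_m}$, which vanishes for $m=n+1$ and collapses for $m=n$ to the single term $\mu_1\Box\cdots\Box\mu_n$.

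The main obstacle lies entirely in this last step, and it is twofold: verifying the triangular vanishing $\mu_i\Box\mu_j=0$ for $i\ge j$, and, more delicately, proving that the ordered product $\mu_1\Box\cdots\Box\mu_n$ is \emph{nonzero}, since this is what pins the index at exactly $n+1$ rather than something smaller. For the non-vanishing I would exhibit an explicit test function $f\in\ell^{\,\infty}(G)$ adapted to the block decomposition—built from functions each depending on a single block and encoding the ordering $1<2<\cdots<n$—and then track the effect of applying the means $M,M_k$ in succession, checking that no intermediate stage collapses the value to zero. Carrying out this evaluation, together with careful bookkeeping of the order-dependent finitely additive iterated integrals, is the genuinely delicate part of the proof; once the strictly triangular table is established, the passage from it to an element of each prescribed nilpotency index is routine.
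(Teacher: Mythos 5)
Your overall skeleton (a family of radical elements with a strictly triangular $\Box$-multiplication table, summed to give an element of index $n+1$) would work if such a family existed, but the construction you propose for the $\mu_k$ cannot produce it, and the failure is not merely that the two key claims are left unverified: your claim that $\mu_1\Box\cdots\Box\mu_n\neq 0$ is \emph{false} for every possible choice of invariant means on the full group. The reason is the weak-* continuity of right multiplication in the first Arens product: the map $\Psi\mapsto\Psi\Box\Phi$ is weak-* continuous for each fixed $\Phi$. If $\Lambda\in\ell^{\,1}(G)''$ satisfies $\delta_s\Box\Lambda=\Lambda$ for all $s\in G$ (as every $G$-invariant mean does), then for $a\in\ell^{\,1}(G)$ linearity and norm-continuity give $a\Box\Lambda=\langle a,1\rangle\Lambda$, and then approximating an arbitrary $\Psi\in\ell^{\,1}(G)''$ weak-* by elements of $\ell^{\,1}(G)$ and using the continuity just quoted gives $\Psi\Box\Lambda=\langle\Psi,1\rangle\Lambda$. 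Each of your $\mu_j=M-M_j$ is a linear combination of fully invariant elements, so for \emph{every} pair $(i,j)$, not just $i\geq j$,
$$\mu_i\Box\mu_j=\langle\mu_i,1\rangle\,\mu_j=(1-1)\,\mu_j=0.$$
The multiplication table is identically zero, $x=\mu_1+\cdots+\mu_n$ satisfies $x^{\Box 2}=0$, and no choice of "masses on half-space sets" can change this. The order-dependence of finitely additive iterated integration that you hope to exploit simply cannot manifest when the fully invariant element sits on the \emph{right} of the product; this is precisely the index-2 barrier of the invariant-means method, and it cannot be broken while all of your building blocks are invariant under the whole group.

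The repair --- and this is what the paper actually does --- is to abandon full invariance. The paper's elements $M_j$ and $\sigma_j$ are images in $\ell^{\,1}(G)''$ of (differences of) invariant means on a \emph{single coordinate copy} $G_j\cong\Z$; they satisfy only the relative invariance $\delta_s\Box\Phi=\pi_j(\delta_s)\Box\Phi$, where $\pi_j$ deletes the $j$-th coordinate. Because $\sigma_j$ is not invariant under translations in the other coordinates, the ordered products $\sigma_1\Box\cdots\Box\sigma_j$ can be, and are, nonzero (the paper tests against the indicator function of the positive orthant, for which each convolution factorizes). The upper bound on the index then comes not from a triangular table but from membership in the nested ideals $I_1=I(\pi_1,0)$, $I_j=I(\pi_j,I_{j-1})$, and the element of index exactly $j+1$ is not a plain sum but the recursively defined $\Lambda_j=M_j\Box\Lambda_{j-1}+\sigma_j$. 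If you try to salvage your block decomposition, you will find you must replace your $G$-invariant means by elements invariant only relative to the projection onto the complementary coordinates --- at which point you have reconstructed the paper's argument.
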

\noindent
Here we understand $\oplus_{i=1}^\infty \Z$ to consist of integer sequences which are eventually zero, so that our example is a countable abelian group.

We note that by a theorem of Grabiner \cite{Gr69}, Theorem \ref{1.3} implies that $\rad(\ell^{\, 1}(\oplus_{i=1}^\infty \Z)'')$ contains non-nilpotent elements. In Section 5, we obtain a similar result on $\Z$, but this time involving a weight. 

\begin{theorem} \label{1.2}		
There exists a weight $\omega$ on $\Z$ such that $\rad(\ell^{\, 1}(\Z, \omega)'')$ contains non-nilpotent elements.
\end{theorem}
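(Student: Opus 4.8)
The plan is to reproduce, inside a single weighted copy of $\Z$, the exterior-algebra mechanism natural to $\oplus_{i=1}^\infty\Z$ that underlies Theorem~\ref{1.3}. There, the $i$-th coordinate supplies a Civin--Yood-type element $x_i\in\rad(\ell^{\,1}(\oplus\Z)'')$ with $x_i^{\Box 2}=0$; distinct coordinates commute and are ``independent'', so that a norm-convergent sum $\Phi=\sum_i c_i x_i$ (with $c_i\to 0$ fast enough) satisfies $\Phi^{\Box n}=n!\sum_{i_1<\cdots<i_n}c_{i_1}\cdots c_{i_n}\,x_{i_1}\Box\cdots\Box x_{i_n}\neq 0$ for every $n$, and is therefore non-nilpotent while lying in the (closed) radical. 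For Theorem~\ref{1.2} the group is only $\Z$, so the role of the infinitely many independent coordinate directions must be played entirely by the weight: I would design $\omega$ so that $\ell^{\,1}(\Z,\omega)''$ carries infinitely many asymptotically independent, flat directions at infinity, each supporting its own square-zero radical element.

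Concretely, I would first fix very sparse, rapidly separating sequences $S_1,S_2,\dots\subseteq\N$, thought of as infinitely many distinct scales, and choose $\omega$ on $\Z$ so that: (a) along each $S_i$ the weight is \emph{asymptotically multiplicative}, i.e. $\omega(s+t)/(\omega(s)\omega(t))\to 1$ as $s,t\to\infty$ within (sums of) $S_i$, which is what is needed to run the weighted Civin--Yood argument of \cite[Theorem 8.27]{DL} and produce an element $x_i\in\rad(\ell^{\,1}(\Z,\omega)'')$ with $x_i^{\Box 2}=0$; and (b) across distinct scales the weight decouples, so that the iterated Arens limits defining the products $x_{i_1}\Box\cdots\Box x_{i_n}$ with $i_1<\cdots<i_n$ retain their mass and exhibit no unexpected algebraic relations. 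Each $x_i$ would itself be a difference of two weak-$*$ cluster points at infinity of normalised point masses $\delta_s/\omega(s)$ with $s\in S_i$, exactly as in the proof of Theorem~\ref{1.1}.

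With the $x_i$ in hand I would form $\Phi=\sum_i c_i x_i\in\rad(\ell^{\,1}(\Z,\omega)'')$ and show $\Phi^{\Box n}\neq 0$ for all $n$. Since the radical is a closed linear subspace and each $x_i$ is radical, membership $\Phi\in\rad$ is immediate; the content is non-nilpotency. For this it suffices, for each fixed $n$, to exhibit one test functional $g_n\in\ell^{\,\infty}(\Z,1/\omega)=\ell^{\,1}(\Z,\omega)'$ with $\langle\Phi^{\Box n},g_n\rangle\neq 0$, which by the exterior expansion reduces to detecting the top product $x_1\Box\cdots\Box x_n$. If non-commutativity of the first Arens product obstructs the clean exterior expansion, I would instead arrange the scales so that $x_i\Box x_j=0$ whenever $i\geq j$ while $x_1\Box\cdots\Box x_n\neq 0$, a strictly upper-triangular pattern that is natural for the first product (which evaluates the right-hand factor's limit first) and still forces $\Phi^{\Box n}\neq 0$. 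As a fallback I could settle for constructing, for each $k$, a radical element nilpotent of index exactly $k$ and then invoke Grabiner's theorem \cite{Gr69}, as in the remark preceding Theorem~\ref{1.3}, to extract a non-nilpotent element.

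The hard part will be step (b): manufacturing a \emph{single} submultiplicative weight on $\Z$---where there is genuinely only one direction to escape to infinity---that nonetheless presents infinitely many independent flat directions whose mutual interactions, read through the nested ultrafilter limits of the Arens product, neither collapse the products $x_{i_1}\Box\cdots\Box x_{i_n}$ to zero nor create relations forcing nilpotency. Balancing submultiplicativity (which forces $\omega(m+n)\leq\omega(m)\omega(n)$ globally, and so tends to damp every interaction) against the requirement that each scale stay flat enough to carry a square-zero mean and that distinct scales remain asymptotically independent is the delicate technical core, and is precisely where the explicit combinatorial construction of $\omega$ and of the detecting functionals $g_n$ must be carried out.
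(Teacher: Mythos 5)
Your plan founders at its first step, for a structural reason specific to $\Z$. On $\Z$ there is only one group of translations, so ``invariant'' can only mean invariant under all of $\Z$: the weighted Civin--Yood machinery you invoke (\cite[Theorem 8.27]{DL}, or Section 3 of this paper) produces elements of the single ideal $I_\omega = \{\Lambda \in A_\omega'' : \delta_n \Box \Lambda = \Lambda \ (n \in \Z), \ \langle \Lambda, 1 \rangle = 0\}$, and by \cite[Proposition 8.23]{DL} this ideal satisfies $I_\omega^{\Box 2} = \{0\}$. Consequently any family $(x_i)$ obtained this way---no matter how cleverly the scales $S_i$ are separated by the weight---satisfies $x_i \Box x_j = 0$ for \emph{all} $i,j$; every product $x_{i_1} \Box \cdots \Box x_{i_n}$ with $n \geq 2$ vanishes, and your $\Phi = \sum_i c_i x_i$ lies in $I_\omega$ and so has $\Phi^{\Box 2} = 0$. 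The point is that invariance is a property relative to the group, not the weight, so ``asymptotically independent flat directions'' are invisible to it; in Theorem \ref{1.3} the independence comes from invariance under each \emph{complemented coordinate subgroup} of $\oplus_{i=1}^\infty \Z$ separately, whereas $\Z$ has no infinite complemented proper subgroup to carry partial invariance. Your alternative description of $x_i$ as a difference of weak-$*$ cluster points of normalised point masses $\delta_s/\omega(s)$ does not rescue this: that is not the construction of Theorem \ref{1.1} (which clusters Ces\`aro-type averages---precisely what makes the limits invariant), and cluster points of single point masses come with no square-zero property and no a priori radical membership. For the same reason your upper-triangular variant and your Grabiner fallback are out of reach by these means; indeed the paper states explicitly that it does not know whether its example contains nilpotent elements of arbitrarily high index.

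The paper's actual proof abandons square-zero building blocks entirely, and this is the idea your proposal is missing: a radical element need not have any vanishing power, because quasi-nilpotence ($\Vert x^{\Box r} \Vert^{1/r} \rightarrow 0$) is compatible with $x^{\Box r} \neq 0$ for every $r$. Concretely, with $\omega = {\rm e}^{\eta}$ the word-length weight for $S_0 = \{2^{k^2} : k \in \Z^+\}$ and $n_k = 2^{k^2} + 2^{(k-1)^2} + \cdots + 1$, the paper takes $\Phi$ to be a \emph{single} weak-$*$ cluster point of $\{\delta_{n_k}/\omega(n_k)\}$. Radical membership is quantitative (Proposition \ref{4.1} together with Lemma \ref{4.3}): for any $\Psi$, the norm $\Vert (\Psi \Box \Phi)^{\Box r} \Vert^{1/r}$ is dominated by $\limsup$ of $\bigl[\Omega^{(r)}(n_{k_1}, \ldots, n_{k_r})\bigr]^{1/r} \leq {\rm e}^{-j}$ when $r = 2^{2j+1}$, so every $\Psi \Box \Phi$ is quasi-nilpotent. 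Non-nilpotence is the matching lower bound (Lemma \ref{4.4}, Corollary \ref{4.5}): for each such fixed $r$ one has $\Omega^{(r)}(n_{k_1}, \ldots, n_{k_r}) \geq {\rm e}^{-r(J+1)} > 0$, whence $\langle \Phi^{\Box r}, \omega \rangle > 0$. The tension you correctly identify---flat enough to be radical, not so flat that powers die---is thus resolved not by manufacturing independent directions but by making the $r$-fold weight quotients small in the $r$-th-root sense as $r \rightarrow \infty$ while staying bounded away from $0$ for each fixed $r$. Finally, even on its own terms your write-up defers exactly the constructions (the weight, the detecting functionals) that would constitute the proof, so it is a programme rather than an argument, and its first step already fails as above.
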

\noindent
However, we do not know whether or not this example has nilpotent elements of arbitrarily high index.

\section{Background and Notation}
\noindent
For us $\N = \{ 1, 2, \ldots \}$ and $\Z^+ = \{ 0,1,2, \ldots \}$. The group of integers is denoted by $\Z$ and 
$$\oplus_{i=1}^\infty \Z = \{(n_i) \in \Z^\N: n_i = 0 \text{ for all but finitely many } i \}.$$

Let $G$ be a discrete group with neutral element $e$. We say that a function $\omega \colon G \rightarrow [1, \infty)$ is a \textit{weight} if $\omega(st) \leq \omega(s)\omega(t) \ (s, t \in G)$ and $\omega(e) = 1$. Given a weight $\omega$ on a group $G$, we define
$$\ell^{\, 1}(G, \omega) = \left \{ f \colon G \rightarrow \C : \Vert f \Vert_\omega := \sum_{s \in G} \vert f(s) \vert \omega(s) < \infty \right \}.$$
This is a unital Banach algebra with the norm given by $\Vert \cdot \Vert_\omega$, multiplication given by convolution, and the vector space operations given pointwise. By a \textit{Beurling algebra} we mean a Banach algebra of the form $\ell^{\, 1}(G, \omega.)$ (More generally, authors refer to algebras of the form $L^1(G, \omega),$ where $G$ is a locally compact group, and $\omega$ is a continuous weight on $G$, as Beurling algebras, but we shall not consider this setting here.)
The dual space of $\ell^{\, 1}(G, \omega)$ may be identified with 
$$\ell^{\, \infty}(G, 1/\omega) = \left\{ f \colon G \rightarrow \C : \Vert f \Vert_{\infty, \omega} := \sup_{s \in G} \frac{\vert f(s) \vert}{\omega(s)} < \infty \right\}$$
via
$$\langle f, g \rangle = \sum_{s \in G} f(s) g(s) \quad (f \in \ell^{\, 1}(G, \omega), g \in \ell^{\, \infty}(G, 1/\omega)).$$
Given a group element $s\in G,$ we denote the point mass at $s$ by $\delta_s \in \ell^{\, 1}(G, \omega)$.

Given a weight $\omega$ on $\Z$ and $n \in \Z$, we sometimes write $\omega_n$ in place of $\omega(n).$ We define 
$\rho_\omega = \inf_{n \in \N} \omega(n)^{1/n};$
by \cite[Proposition A.1.26 (iii)]{D}, in fact $\rho_\omega = \lim_{n \rightarrow \infty}\omega(n)^{1/n}$.

Let $A$ be an algebra, and take $n \in \N$. We say that $a \in A$ is \textit{nilpotent of index $n$} if $a^n = 0$, but $a^{n-1} \neq 0$.
Given a left ideal $I$ of $A$ and $n \in \N,$ we write $I^n = \{ a_1 a_2 \cdots a_n : a_1, \ldots, a_n \in I \}$, and we say that $I$ is \textit{nilpotent of index $n$} if $I^n = \{ 0 \}$ but $I^{n-1} \neq \{ 0 \}$.

Now let $A$ be a unital Banach algebra. We say that $a \in A$ is \textit{quasi-nilpotent} if its spectrum is zero,
 or, equivalently, if $\lim_{n \rightarrow \infty}
 \Vert a^n \Vert^{1/n} = 0$, and we denote the set of quasi-nilpotent elements
  of $A$ by $\mathcal{Q}(A).$ 
 Every nilpotent element is also quasi-nilpotent. We define the \textit{Jacobson radical} of $A$,
denoted by $\rad(A)$, to be the largest left ideal of $A$ contained in $\mathcal{Q}(A)$, and it can be shown that 
$$\rad(A) = \{ a \in A : ba \in  \mathcal{Q}(A) \ (b \in A) \}.$$
In fact, $\rad(A)$ is a closed, two-sided ideal of $A$, and 
$$\rad(A) = \{ a \in A : ab \in  \mathcal{Q}(A) \ (b \in A) \}.$$
Many equivalent characterizations of $\rad(A)$ are available (for details see \cite[Section 1.5]{D}). 

Denote the dual space of $A$ by $A'$ and its bidual by $A''$. Arens (\cite{A1}, \cite{A2}) introduced two products on $A''$, denoted by $\Box$ and $\Diamond$, rendering it a Banach algebra, both of which have the property that they agree with the original multiplication on $A$, when $A$ is identified with its image under the canonical embedding into $A''$. These are called, respectively, the \textit{first} and \textit{second Arens product}, and are defined by
\begin{align*}
\langle \Psi \Box \Phi, \lambda \rangle &= \langle \Psi, \Phi \cdot \lambda \rangle, &
\langle \Psi \Diamond \Phi, \lambda \rangle &= \langle \Phi, \lambda \cdot \Psi \rangle, \\
\langle \Phi \cdot \lambda, a \rangle &= \langle \Phi, \lambda \cdot a \rangle, &
\langle \lambda \cdot \Psi, a \rangle &= \langle \Psi, a \cdot \lambda \rangle, \\
\langle \lambda \cdot a, b \rangle  &= \langle \lambda, ab \rangle, &
\langle a \cdot \lambda, b \rangle &= \langle \lambda, ba \rangle,
\end{align*}
for $\Phi, \Psi \in A'', \lambda \in A', a, b \in A$ (for more details see \cite[Section 2.6]{D}).
In this article, unless we specify otherwise, whenever we talk about the bidual of a Banach algebra we are implicitly considering it as an algebra with the first Arens product. The first Arens product has the property that multiplication on the right is weak-* continuous, whereas the second Arens product has this property on the left. In particular the following formulae hold, for $\Phi, \Psi$ elements of $A''$, and $(a_\alpha), (b_\beta) \subset A$ nets converging in the weak-* topology to $\Phi$ and $\Psi$ respectively:
\begin{equation*}
\Phi \Box \Psi = \lim_\alpha \lim_\beta a_\alpha b_\beta, \quad \Phi \Diamond \Psi = \lim_\beta \lim_\alpha a_\alpha b_\beta.
\end{equation*}
In these formulae the limits are again taken in the weak-* topology. 
If $\Box = \Diamond,$ we say that $A$ is \textit{Arens regular}, and if the other extreme occurs, namely that 
\begin{multline*} 
\left \{ \Phi \in A'' : \Phi \Box \Psi = \Phi \Diamond \Psi \ (\Psi \in A'') \right \} = \\
\left \{ \Phi \in A'' : \Psi \Box \Phi = \Psi \Diamond \Phi \ (\Psi \in A'') \right \} = A,
\end{multline*}
 we say that $A$ is \textit{strongly Arens irregular}.  Both of these extremes may occur for Banach algebras of the form $\ell^{\, 1}(\Z, \omega)$, as may intermediate cases (see \cite[Theorem 8.11]{DL} and \cite[Example 9.7]{DL}).

We fix some notation relating to repeated limits. Let $X$ and $Y$ be topological spaces, $I$ a directed set, and $\mathcal{U}$ a filter on $I$. Let $(x_\alpha)_{\alpha \in I}$ be a net in $X$, let $r \in \N,$ and let $f : X^{r} \rightarrow Y$ be a function. Then we define 
$$\lim_{\underline{\alpha} \rightarrow \mathcal{U}} {}^{(r)} f(x_{\alpha_1}, \ldots, x_{\alpha_r}) = \lim_{\alpha_1 \rightarrow \mathcal{U}} \cdots \lim_{\alpha_r \rightarrow \mathcal{U}} f(x_{\alpha_1}, \ldots, x_{\alpha_r}),$$
whenever the repeated limit exists. We define 
$$\limsup_{\underline{\alpha} \rightarrow \mathcal{U}} {}^{(r)} f(x_{\alpha_1}, \ldots, x_{\alpha_r})$$
 analogously. Suppose now that we have two directed sets $I$ and $J$ and two filters: $\mathcal{U}$ on $I$ and $\mathcal{V}$ on $J$. Let $(x_\alpha)_{\alpha \in I}$ and $(y_\beta)_{\beta \in J}$ be two nets in $X$, let $r \in \N,$ and let $f : X^{2r} \rightarrow Y$. Then we define
\begin{multline*}
\lim_{\underline{\alpha} \rightarrow \mathcal{U}, \, \underline{\beta} \rightarrow \mathcal{V}}   {}^{(r)} f(x_{\alpha_1}, y_{\beta_1}, \ldots, x_{\alpha_r},  y_{\beta_r})= \\
 \lim_{\alpha_1 \rightarrow \mathcal{U}} \lim_{\beta_1 \rightarrow \mathcal{V}} \cdots \lim_{\alpha_r \rightarrow \mathcal{U}} \lim_{\beta_r \rightarrow \mathcal{V}}  f(x_{\alpha_1}, y_{\beta_1}, \ldots, x_{\alpha_r},  y_{\beta_r}),
\end{multline*}
whenever the limit exists.
It is important to note that the choice of directed set in the above repeated limit alternates. In expressions of the form $\lim_{\underline{\alpha} \rightarrow \infty} {}^{(r)} f(x_{\alpha_1}, \ldots, x_{\alpha_r})$ the symbol `$\infty$' is understood to represent the Fr\'echet filter on the  directed set.

\section{Semisimplicity of $\ell^{\, 1}(\Z, \omega)''$}
\noindent
In this section, we shall prove Theorem \ref{1.1}. Throughout $\omega$ will be a weight on $\Z$, and we shall write $A_\omega = \ell^{\, 1}(\Z, \omega)$.
In an abuse of notation, we shall write $1 \in \ell^{\, \infty}(\Z)$ for the sequence which is constantly 1. Note that this is a character, known as the augmentation character, when regarded as an element of $A_\omega'$. We define 
$$I_\omega = \left \{ \Lambda \in A_\omega'': \delta_n \Box \Lambda = \Lambda \ (n \in \Z), \; \langle \Lambda, 1 \rangle = 0 \right  \}.$$
By \cite[Proposition 8.23]{DL} $I_\omega$ is an ideal of $A_\omega''$, satisfying $I_\omega^{\Box 2} = \{ 0 \}$, so that $I_\omega \subset \rad(A_\omega'')$. Our strategy will be to reduce to a setting in which we can show that $I_\omega \neq \{ 0 \}$. Our argument is an adaptation of \cite[Theorem 8.27]{DL}.

Let $\Lambda \in \ell^{\, \infty}(\Z, 1/\omega)'$. We say that $\Lambda$ is \textit{positive}, written $\Lambda \geq 0$, if $\langle \Lambda, f \rangle \geq 0$ whenever $f \geq 0 \ (f \in \ell^{\, \infty}(\Z, 1/\omega))$, and we say that $\Lambda$ is a \textit{mean} if $\Lambda \geq 0$ and $\Vert \Lambda \Vert = 1$.
We say that a mean $\Lambda \in  \ell^{\, \infty}(\Z, 1/\omega)'$ is an \textit{invariant mean} if $\delta_n \Box \Lambda = \Lambda \ (n \in \Z)$. 

\begin{lemma} \label{0}	
Let $\omega$ be a weight on $\Z$ and let $\Lambda \in \ell^{\, \infty}(\Z, 1/\omega)'$ be positive. Then $\Vert \Lambda \Vert = \langle \Lambda, \omega \rangle$.
\end{lemma}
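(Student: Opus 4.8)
The plan is to exploit the fact that the weight $\omega$, regarded as an element of $\ell^{\,\infty}(\Z, 1/\omega)$, serves as an order unit for this space, and then to invoke the classical principle that a positive linear functional attains its norm at the order unit. First I would record the two basic facts that drive everything. Since $\Vert \omega \Vert_{\infty, \omega} = \sup_n \omega(n)/\omega(n) = 1$, we have $\omega \in \ell^{\,\infty}(\Z, 1/\omega)$, and hence $\langle \Lambda, \omega \rangle \leq \Vert \Lambda \Vert$; moreover, because each $\omega(n) \geq 1 > 0$, the closed unit ball of $\ell^{\,\infty}(\Z, 1/\omega)$ is exactly $\{ f : \vert f(n) \vert \leq \omega(n) \ (n \in \Z) \}$. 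Thus it remains only to prove the reverse inequality $\Vert \Lambda \Vert \leq \langle \Lambda, \omega \rangle$, and for this it suffices to bound $\vert \langle \Lambda, f \rangle \vert$ by $\langle \Lambda, \omega \rangle$ for every $f$ with $\vert f \vert \leq \omega$ pointwise.

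Next I would treat the real-valued case. One first checks that $\Lambda$ is real on real functions: writing a real $f$ as $f_+ - f_-$ with $f_\pm \geq 0$ exhibits $\langle \Lambda, f \rangle$ as a difference of non-negative reals. Now if $f$ is real with $\vert f \vert \leq \omega$ pointwise, then both $\omega - f$ and $\omega + f$ are non-negative, so positivity of $\Lambda$ gives $\langle \Lambda, \omega - f \rangle \geq 0$ and $\langle \Lambda, \omega + f \rangle \geq 0$, which together yield $\vert \langle \Lambda, f \rangle \vert \leq \langle \Lambda, \omega \rangle$.

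Finally I would remove the reality assumption by a rotation argument. Given any $f$ with $\vert f \vert \leq \omega$ pointwise, choose a unimodular scalar $c$ so that $c \langle \Lambda, f \rangle = \vert \langle \Lambda, f \rangle \vert$, and put $g = cf$, which still satisfies $\vert g \vert \leq \omega$. Since $\langle \Lambda, g \rangle$ is real and $\Lambda$ is real on real functions, taking real parts gives $\langle \Lambda, g \rangle = \langle \Lambda, \mathrm{Re}\, g \rangle$, where $\mathrm{Re}\, g$ is a real function with $\vert \mathrm{Re}\, g \vert \leq \vert g \vert \leq \omega$. The real case then gives $\vert \langle \Lambda, f \rangle \vert = \langle \Lambda, \mathrm{Re}\, g \rangle \leq \langle \Lambda, \omega \rangle$, and taking the supremum over all such $f$ completes the proof. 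The argument is essentially routine; the only point requiring a little care is this passage from complex to real scalars, where one must confirm that the rotated function still lies in the unit ball and that its real part remains dominated by $\omega$.
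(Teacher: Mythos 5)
Your proof is correct. It differs from the paper's in presentation: the paper simply transports the problem through the positive isometric isomorphism $T \colon \ell^{\,\infty}(\Z, 1/\omega) \rightarrow \ell^{\,\infty}(\Z)$, $T(f) = f/\omega$, notes that $T(\omega) = 1$, and then cites the classical fact that a positive functional on the unital C*-algebra $\ell^{\,\infty}(\Z)$ attains its norm at the identity. You instead work directly in the weighted space and reprove that classical fact from scratch: the identification of the unit ball as $\{ f : \vert f \vert \leq \omega \}$, the observation that $\omega \pm f \geq 0$ for real $f$ in the ball, and the unimodular-rotation step to pass from real to complex $f$ are exactly the standard order-unit argument, with $\omega$ playing the role that $1$ plays in $\ell^{\,\infty}(\Z)$. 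In effect you have inlined (and transported) the proof that the paper invokes as a black box. What your route buys is self-containedness — no appeal to C*-algebra theory and no auxiliary isomorphism — at the cost of length; what the paper's route buys is a two-line proof and a clear conceptual reason why the lemma is just the unweighted statement in disguise. All the delicate points in your argument (that $\Lambda$ is real on real functions, that the rotated function stays in the ball, and that $\langle \Lambda, g \rangle = \langle \Lambda, \mathrm{Re}\, g \rangle$ once $\langle \Lambda, g \rangle$ is real) are handled correctly.
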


\begin{proof}
This follows by considering the positive isometric Banach space isomorphism $T \colon \ell^{\, \infty}(\Z, 1/\omega) \rightarrow \ell^{\, \infty}(\Z)$
given by $T(f) = f/\omega \ (f \in \ell^{\, \infty}(\Z, 1/\omega))$, and then using the facts that the formula holds in the C*-algebra $\ell^{\, \infty}(\Z)$ and that $T(\omega) = 1.$
\end{proof}
 
 In what follows, given $E \subset \N$ we denote the complement of $E$ by $E^c$.
 
\begin{lemma} \label{1}
Let $\omega$ be a weight on $\Z,$ and suppose that $\rho_\omega = 1.$ Then there exist at least two distinct invariant means $\Lambda$ and $M$ on $\ell^{\, \infty}(\Z, 1/\omega)$ such that $\langle \Lambda, 1 \rangle = \langle M, 1 \rangle$.
\end{lemma}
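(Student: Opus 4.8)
The plan is to realise two invariant means as weak-$*$ cluster points of weighted averages over the positive half-line, and then to separate them using a carefully chosen subset of $\Z^+$; their difference will be a nonzero element of $I_\omega$, which is what is ultimately needed for Theorem \ref{1.1}. For $N \in \N$ set $s_N = \sum_{k=0}^{N-1} \omega_k$ and define $\Lambda_N = s_N^{-1} \sum_{k=0}^{N-1} \delta_k \in A_\omega \subset A_\omega''$. Each $\Lambda_N$ is positive and satisfies $\langle \Lambda_N, \omega \rangle = 1$, so by Lemma \ref{0} it is a mean, and $\langle \Lambda_N, 1 \rangle = N/s_N \in (0,1]$.

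First I would show that $\rho_\omega = 1$ forces $\liminf_{N} \omega_N / s_N = 0$: if instead $\omega_N \geq c\, s_N$ for all large $N$ and some $c > 0$, then $s_N \geq (1+c) s_{N-1}$ eventually, so that $s_N$, and hence $\omega_N$, grows at least geometrically, contradicting $\rho_\omega = \lim_N \omega_N^{1/N} = 1$. Thus there is an infinite set on which $\omega_N/s_N \to 0$; since $s_N \geq N \to \infty$ and $N/s_N \in [0,1]$, after passing to a further infinite subset $S \subseteq \N$ I may assume both $\omega_N/s_N \to 0$ and $N/s_N \to c_+$ for some $c_+ \in [0,1]$ as $N \to \infty$ in $S$. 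A telescoping computation of $\delta_n \Box \Lambda_N = s_N^{-1}\sum_k \delta_{n+k}$ then yields, for each fixed $n \in \Z$, a bound of the form
$$\Vert \delta_n \Box \Lambda_N - \Lambda_N \Vert_\omega \leq \frac{C_n\, \omega_N + C_n'}{s_N},$$
where submultiplicativity is used to control $\omega(N \pm j)$ by a multiple of $\omega_N$ for $0 \le j \le |n|$. Along $S$ the right-hand side tends to $0$. Since $\Phi \mapsto \delta_n \Box \Phi$ is weak-$*$ continuous for $\delta_n \in A_\omega$, any weak-$*$ cluster point of $(\Lambda_N)_{N \in S}$ is invariant, and is again a mean (positivity and the value $1$ on $\omega$ pass to the limit, so Lemma \ref{0} gives norm one); moreover every such cluster point takes the value $c_+$ on $1$.

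It remains to produce two \emph{distinct} such cluster points, and this is the crux. The idea is to find $E \subseteq \Z^+$ so that the weighted averages $\langle \Lambda_N, \omega \1_E \rangle = s_N^{-1}\sum_{k \in E,\, k < N} \omega_k$ oscillate along $S$. I would pick indices $n_0 < n_1 < n_2 < \cdots$ in $S$ growing so fast that $s_{n_{j-1}}/s_{n_j} \to 0$ (possible since $s_N \to \infty$ along $S$) and set $E = \bigcup_{j} [n_{2j}, n_{2j+1})$. As each block carries almost all of the mass $s_{n_j}$ accumulated up to its right endpoint, one checks that $\langle \Lambda_{n_{2j+1}}, \omega \1_E \rangle \to 1$ whereas $\langle \Lambda_{n_{2j}}, \omega \1_E \rangle \to 0$. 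Taking $\Lambda$ to be a weak-$*$ cluster point of $(\Lambda_{n_{2j+1}})_j$ and $M$ one of $(\Lambda_{n_{2j}})_j$, both are invariant means with $\langle \Lambda, 1 \rangle = \langle M, 1 \rangle = c_+$, while $\langle \Lambda, \omega \1_E \rangle = 1 \neq 0 = \langle M, \omega \1_E \rangle$, so $\Lambda \neq M$.

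The main obstacle is the separation step: one must build a single $E$ whose indicator is detected differently by the two means while the value on $1$, governed by $N/s_N$, stays pinned to the common limit $c_+$. This is exactly why the averages are tested against $\omega \1_E$ rather than $\1_E$; when the weight grows one has $\langle \Lambda, \1_E \rangle = 0$ for every such cluster point and every $E \subseteq \Z^+$, so bounded functions cannot distinguish the means, whereas the block construction exploits $s_N \to \infty$ to make the $\omega$-weighted mass swing between negligible and almost total.
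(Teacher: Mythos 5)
Your proof is correct and follows essentially the same route as the paper: weighted Ces\`aro averages $\Lambda_N = s_N^{-1}\sum_{k<N}\delta_k$ made invariant by $\omega_N/s_N \to 0$ (extracted from $\rho_\omega = 1$), the value on $1$ pinned by passing to a subsequence where $N/s_N$ converges, and two limit points separated by a test function equal to $\omega$ on rapidly growing alternating blocks and $0$ elsewhere --- your $\omega\1_E$ is precisely the paper's inductively constructed $\psi$, and your condition $s_{n_{j-1}}/s_{n_j}\to 0$ matches its choice of indices with small ratios $C_{t_k}/C_{s_{k+1}}$. The only differences are presentational: you take weak-* cluster points of two explicit subsequences, whereas the paper first proves weak-* divergence of the whole sequence and then extracts two distinct limits along two free ultrafilters.
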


\begin{proof}
By an easy calculation, using the fact that $\inf_{n \in \N} \omega_n^{1/n} = 1$ (see \cite[Lemma 5.3]{W}), there exists a strictly increasing sequence $(n_k)$ of integers such that $n_0 = 0, n_1 = 1$ and such that
\begin{equation} \label{eq1}	
\lim_{k \rightarrow \infty} \omega_{n_k}/(\omega_0 + \cdots + \omega_{n_k}) = 0. 
\end{equation}
By passing to a subsequence if necessary we may suppose that 
$$\lim_{k \rightarrow \infty} (n_k+1)/(\omega_0 + \cdots + \omega_{n_k})$$
exists.

Set $C_k = \omega_0 + \cdots + \omega_{n_k}$, and define $\Lambda_k = \frac{1}{C_k}(\delta_0 + \cdots + \delta_{n_k})$; we regard each $\Lambda_k$ as an element of $A_\omega''$.  
Notice that, for each fixed $i \in \N,$ we have
\begin{equation}  \label{eq3}	 
\lim_{k \rightarrow \infty}C_i/C_k = 0.
\end{equation}
We shall first show that the sequence $(\Lambda_k)$ does not converge when considered as a sequence in $A_\omega''$ with the weak-* topology. 
This will then allow us to use two different ultrafilters in such a way as to obtain distinct limits of $(\Lambda_k)$, and these limits will turn out to be our 
invariant means. To achieve this,
we shall inductively construct a function $\psi \colon \Z \rightarrow \C$ and choose non-negative integers  
$$s_1 <t_1 < s_2 < t_2  < \cdots < s_k < t_k < \cdots$$
such that
\begin{equation} \label{eq2} 
\vert \langle \Lambda_{s_j}, \psi \rangle \vert < \frac{1}{4}, \qquad  \vert \langle \Lambda_{t_j}, \psi \rangle \vert >\frac{3}{4} \quad (j \in \N)
\end{equation}
and
\begin{equation} \label{eq2a}
0 \leq \psi(i) \leq \omega_i+1 \ (i \in \Z).
\end{equation}
Since \eqref{eq2a} ensures that $\psi \in \ell^{\, \infty}(\Z, 1/\omega),$ this will indeed show that $(\Lambda_k)$ is weak-* divergent.
 We set $s_1 =0$ and $t_1 = 1$,
 and define $\psi(i) = 0 \ (i \leq 0)$
and $\psi(1) = C_1$, and observe that this ensures that \eqref{eq2} holds for $j=1$, and that \eqref{eq2a} holds for all $i \leq 1$.

Now assume inductively that we have found $s_1 <t_1 < \cdots < s_k < t_k$, and defined $\psi$ up to $n_{t_k}$ in such a way that \eqref{eq2} holds for $j=1, \ldots, k$, and such that \eqref{eq2a} holds for $i \leq n_{t_k}$.  
By \eqref{eq3}, we may choose $s_{k+1} > t_k$ such that 
$$\frac{C_{t_k}}{C_{s_{k+1}}} < \frac{1}{4} \vert \langle \Lambda_{t_k}, \psi \rangle \vert^{-1};$$
we then define $\psi(i) = 0 \ (n_{t_k} < i \leq n_{s_{k+1}}),$ and note that \eqref{eq2a} holds
 trivially for these values of i. Then
$$\vert \langle \Lambda_{s_{k+1}}, \psi \rangle \vert = \frac{C_{t_k}}{C_{s_{k+1}}} \vert \langle \Lambda_{t_k}, \psi \rangle \vert < \frac{1}{4},$$
as required.

Again using \eqref{eq3}, we may choose $t_{k+1} > s_{k+1}$ such that 
$C_{s_{k+1}}/C_{t_{k+1}} < 1/8,$
so that
$$\frac{\omega(n_{s_{k+1}}+1)+ \cdots + \omega(n_{t_{k+1}})}{C_{t_{k+1}}} > \frac{7}{8}.$$
Set $\psi(i) = \omega_i \ (n_{s_{k+1}} < i \leq n_{t_{k+1}}),$ and note
that \eqref{eq2a} continues to hold. Then
\begin{align*}
\vert \langle \Lambda_{t_{k+1}}, \psi \rangle \vert &= \left \vert \frac{\omega(n_{s_{k+1}}+1)+ \cdots + \omega(n_{t_{k+1}})}{C_{t_{k+1}}}  + 
 \frac{C_{s_{k+1}}}{C_{t_{k+1}}}\langle \Lambda_{s_{k+1}}, \psi \rangle \right \vert \\
&> \frac{7}{8} - \frac{1}{8} \cdot \frac{1}{4} > \frac{3}{4}.
\end{align*}
The induction continues.

Let $\mathcal{F}$ denote the Fr\'echet filter on $\N$. Let $\mathcal{U}$ be a free ultrafilter on $\N$, and set $\Lambda = \lim_{k \rightarrow \mathcal{U}} \Lambda_k$ (the limit being taken in the weak-* topology on $A_\omega''$). 
We have shown that the sequence $(\Lambda_k)$ is not convergent in the weak-* topology on $A_\omega''$, and so it follows that there exists a weak-* open neighbourhood $\mathcal{O}$ of $\Lambda$ such that $E := \{k \in \N: \Lambda_k \in \mathcal{O} \} \notin \mathcal{F}$. As $E \notin \mathcal{F}$, the set $E^{c}$ is infinite, so that $E^{c} \cap A \neq \emptyset \ (A \in \mathcal{F})$. Hence there exists a free ultrafilter $\mathcal{V}$ on $\N$ containing $E^c$ and $\mathcal{F}$. Let $M = \lim_{k \rightarrow \mathcal{V}} \Lambda_k$. Since $E^{c} \in \mathcal{V}$, we have $E \notin \mathcal{V}$, so that $\Lambda \neq M$.

Next we show that $\Lambda$ and $M$ are invariant means on $\ell^{\, \infty}(\Z, 1/\omega)$. Let $f \in \ell^{\, \infty}(\Z, 1/\omega)$. Then 
\begin{align*}
\vert \langle \delta_1 \Box \Lambda_k - \Lambda_k, f \rangle \vert  &= 
\frac{1}{C_k} \vert (f(1) + \cdots + f(n_k+1)) - (f(0) + \cdots + f(n_k))\vert \\
&= \frac{1}{C_k} \vert f(n_k+1) - f(0)\vert \\
&\leq \frac{1}{C_k} \Vert f \Vert (\omega(1)\omega(n_k)+\omega(0)),
\end{align*}
which, by \eqref{eq1}, tends to zero as $k \rightarrow \infty$. Hence 
$$\delta_1 \Box \Lambda - \Lambda = \lim_{k \rightarrow \mathcal{U}}(\delta_1 \Box \Lambda_k - \Lambda_k) = 0,$$
and a similar calculation shows that $\delta_{-1} \Box \Lambda = \Lambda$ as well. It follows that $\Lambda$ is invariant. That $\Lambda \geq 0$ is clear, and hence, by Lemma \ref{0}, 
$$\Vert \Lambda \Vert = \langle \Lambda, \omega \rangle  = \lim_{k \rightarrow \mathcal{U}} \langle \Lambda_k, \omega \rangle = 1.$$
 Hence $\Lambda$ is an invariant mean, as claimed. The same argument shows that $M$ is also an invariant mean.

Finally, we calculate that 
\begin{equation*}
\langle \Lambda, 1 \rangle = \lim_{k \rightarrow \mathcal{U}} \langle \Lambda_k, 1 \rangle \\
= \lim_{k \rightarrow \infty} (n_k+1)/C_k = \lim_{k \rightarrow \mathcal{V}} \langle \Lambda_k, 1 \rangle \\
= \langle M, 1 \rangle,
\end{equation*}
as required.
\end{proof}

We now prove the main result of this section.
\begin{proof}[Proof of Theorem \ref{1.1}]
Let $\rho = \rho_\omega$, and let $\gamma_n = \omega_n/ \rho^n \ (n \in \Z).$ Then $\gamma$ is a weight on $\Z$, and $T \colon (f(n)) \mapsto (\rho^n f(n))$ defines an (isometric) isomorphism of Banach algebras $A_\gamma \rightarrow A_\omega$. The weight $\gamma$ satisfies the hypothesis of Lemma \ref{1}, so that there exist distinct invariant means $\Lambda$ and $M$ on $A_\gamma''$ as in that lemma. Then $\langle \Lambda - M, 1 \rangle = 0$, so that $\Lambda - M \in I_\gamma \setminus \{ 0 \}$. Hence, by \cite[Proposition 8.23]{DL}, $\rad(A_\gamma'') \neq \{ 0 \},$ so that $\rad(A_\omega'') \neq \{ 0 \}$. 
\end{proof}

\textit{Remark.}
A trivial modification of the proof of Theorem \ref{1.1} shows that in fact we also have $\rad(\ell^{\, 1}(\Z^+, \omega)'') \neq \{ 0 \}$ for every weight $\omega$ on $\Z^+$.
\medskip

\textit{Remark.} Since $A_\omega$ is commutative, $(A_\omega'', \Diamond) = (A_\omega'', \Box)^{\rm op}$, and it follows that $\rad(A_\omega'', \Diamond) = \rad(A_\omega'', \Box)$, so that $(A_\omega'', \Diamond)$ is never semisimple either.

\section{The Radical of $\ell^{\, 1}(\oplus_{i=1}^\infty \Z)''$}

In this section we prove Theorem \ref{1.3}. In addition we observe in Corollary \ref{5.5} that there  are many non-amenable groups $G$ for which
 $\rad(\ell^{\, 1}(G)'') \neq \{ 0 \}$. Ideals of the following form will be central to both  of these arguments.

\begin{definition} \label{5.1}
Let $G$ be a group, let $\theta \colon \ell^{\, 1}(G) \rightarrow \ell^{\, 1}(G)$ be a bounded algebra homomorphism, and let $J \subset \ell^{\, 1}(G)''$ be an ideal. We define
$$I(\theta, J) = \{ \Phi \in \ell^{\, 1}(G)'' : \delta_s \Box \Phi = \theta(\delta_s) \Box \Phi \ (s \in G), \; \theta''(\Phi) \in J \}.$$
\end{definition}

\begin{proposition} \label{5.2}
Let $G, \theta$ and $J$ be as in Definition \ref{5.1}. Then $I(\theta, J)$ is an ideal in $\ell^{\, 1}(G)''$.
\end{proposition}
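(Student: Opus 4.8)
The plan is to verify directly that $I(\theta, J)$ is closed under the two defining conditions and that it absorbs products from both sides. Recall that we must show two things for an arbitrary $\Phi \in I(\theta, J)$ and arbitrary $\Psi \in \ell^{\, 1}(G)''$: first that $\Phi \Box \Psi$ and $\Psi \Box \Phi$ satisfy the translation-type relation $\delta_s \Box (\cdot) = \theta(\delta_s) \Box (\cdot)$ for every $s \in G$, and second that $\theta''$ maps these products back into the ideal $J$. The second requirement should be the cleaner of the two: I expect $\theta''$ to be a homomorphism for the first Arens product (since $\theta$ is an algebra homomorphism, its second transpose $\theta''$ is a homomorphism of $(\ell^{\, 1}(G)'', \Box)$, a standard fact about Arens products), so that $\theta''(\Phi \Box \Psi) = \theta''(\Phi) \Box \theta''(\Psi)$. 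Because $\theta''(\Phi) \in J$ and $J$ is an ideal, both $\theta''(\Phi) \Box \theta''(\Psi)$ and $\theta''(\Psi) \Box \theta''(\Phi)$ lie in $J$, giving the ideal membership for both one-sided products at once.

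For the first condition, I would treat the right multiplier $\Phi \Box \Psi$ and the left multiplier $\Psi \Box \Phi$ separately, since the two Arens products behave asymmetrically. Consider $\Phi \Box \Psi$: I want $\delta_s \Box (\Phi \Box \Psi) = \theta(\delta_s) \Box (\Phi \Box \Psi)$. Here associativity of $\Box$ lets me rewrite the left side as $(\delta_s \Box \Phi) \Box \Psi$ and the right side as $(\theta(\delta_s) \Box \Phi) \Box \Psi$, and these agree because $\delta_s \Box \Phi = \theta(\delta_s) \Box \Phi$ by hypothesis on $\Phi$. This direction is purely formal.

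The genuinely delicate case is $\Psi \Box \Phi$, where the defining element $\Phi$ now sits on the right and I cannot simply pull the relation out. I need $\delta_s \Box (\Psi \Box \Phi) = \theta(\delta_s) \Box (\Psi \Box \Phi)$, i.e. $(\delta_s \Box \Psi) \Box \Phi = (\theta(\delta_s) \Box \Psi) \Box \Phi$. This is where I expect the main obstacle: I must exploit that right multiplication by $\Phi$ is weak-* continuous (a hallmark of the first Arens product) together with the fact that $\theta$ is implemented by a homomorphism. The strategy is to approximate $\Psi$ in the weak-* topology by a net $(a_\alpha)$ from $\ell^{\, 1}(G)$, so that $\delta_s \Box \Psi = \lim_\alpha (\delta_s * a_\alpha)$ and $\theta(\delta_s) \Box \Psi = \lim_\alpha (\theta(\delta_s) * a_\alpha)$ in the weak-* topology. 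Applying right multiplication by $\Phi$ and invoking its weak-* continuity, the problem reduces to comparing $\lim_\alpha (\delta_s * a_\alpha) \Box \Phi$ with $\lim_\alpha (\theta(\delta_s) * a_\alpha) \Box \Phi$, which I can hope to control by moving $\delta_s$ and $\theta(\delta_s)$ across to hit $\Phi$ through the relation $\delta_s \Box \Phi = \theta(\delta_s) \Box \Phi$ and the module identities relating left and right actions. The bookkeeping with the dual-module actions $\lambda \cdot a$ and $a \cdot \lambda$ will be the crux, and I would carry it out by testing against an arbitrary $\lambda \in \ell^{\, 1}(G)'$ and unwinding the Arens-product definitions until the hypothesis on $\Phi$ can be applied.

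Once both the translation relation and the $\theta''$-membership are established for $\Phi \Box \Psi$ and for $\Psi \Box \Phi$, it follows immediately that $I(\theta, J)$ is a two-sided ideal; linearity and norm-closedness are routine and follow from the continuity of the operations and of $\theta''$, so I would note these only briefly.
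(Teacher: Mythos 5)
Your handling of two of the three requirements matches the paper exactly and is correct: $\delta_s \Box (\Phi \Box \Psi) = \theta(\delta_s) \Box (\Phi \Box \Psi)$ is indeed purely formal from associativity, and the membership condition follows, as you say, from the standard fact that $\theta''$ is a homomorphism for $\Box$, so that $\theta''(\Psi \Box \Phi) = \theta''(\Psi) \Box \theta''(\Phi) \in J$ and $\theta''(\Phi \Box \Psi) = \theta''(\Phi) \Box \theta''(\Psi) \in J$. The gap is in the case you yourself identify as the crux, $\Psi \Box \Phi$, where you stop at a hope rather than an argument. Your reduction is sound as far as it goes: by weak-* continuity of right multiplication by $\Phi$ and of left multiplication by elements of $\ell^{\, 1}(G)$, it suffices to prove $(\delta_s * a) \Box \Phi = (\theta(\delta_s) * a) \Box \Phi$ for every $a \in \ell^{\, 1}(G)$. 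But the mechanism you propose for this step fails. Unwinding the Arens definitions against a test functional $\lambda$ only reproduces associativity: $\langle (\delta_s * a) \Box \Phi, \lambda \rangle = \langle \delta_s \Box (a \Box \Phi), \lambda \rangle$, so you are back to needing the translation relation for the element $a \Box \Phi$ --- which is precisely an instance of the statement being proved. And ``moving $\delta_s$ across to hit $\Phi$'' is not available either: $G$ is not assumed abelian (the proposition is applied in Corollary \ref{5.5} to groups that need not be), so $\delta_s$ cannot pass the factor $a$, and the relation $\delta_s \Box \Phi = \theta(\delta_s) \Box \Phi$ by itself cannot reach $\Phi$ through $a$.

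The missing idea, which is the heart of the paper's proof, is to reduce further to point masses and use the group law together with multiplicativity of $\theta$, applying the hypothesis on $\Phi$ at \emph{products} of group elements rather than only at $s$. For $a = \delta_t$ one computes
\[
\delta_s \Box (\delta_t \Box \Phi) = \delta_{st} \Box \Phi = \theta(\delta_{st}) \Box \Phi = \theta(\delta_s) \Box \theta(\delta_t) \Box \Phi = \theta(\delta_s) \Box (\delta_t \Box \Phi),
\]
where the second equality is the hypothesis at the element $st$ and the last equality is the hypothesis at $t$; note this uses the full strength of the assumption that the relation holds at every group element. One then extends to general $a \in \ell^{\, 1}(G)$ by linearity and norm convergence of $a = \sum_t a(t)\delta_t$, and to general $\Psi$ by weak-* limits --- this is exactly the paper's ``linear combinations and weak-* limits'' step, which is your reduction run in the opposite direction. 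Two minor further points: your claim that $I(\theta, J)$ is norm-closed is neither needed (ideals here are purely algebraic, and $J$ itself is not assumed closed) nor true in general, since the condition $\theta''(\Phi) \in J$ is closed only when $J$ is; and once the point-mass identity is in hand, your separate treatment of $\Phi \Box \Psi$ and the $J$-membership can stand unchanged.
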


\begin{proof}
Let $\Phi \in I(\theta, J)$, and let $s, t \in G$. Then
\begin{align*}
\delta_s \Box (\delta_t \Box  \Phi) = \delta_{st} \Box \Phi = \theta(\delta_{st}) \Box \Phi = \theta(\delta_s) \Box \theta(\delta_t) \Box \Phi 
=\theta(\delta_s) \Box (\delta_t \Box \Phi).
\end{align*}
By taking linear combinations and weak-* limits, we may conclude that 
$$\delta_s \Box \Psi \Box \Phi = \theta(\delta_s) \Box \Psi \Box \Phi$$ 
for every $\Psi \in \ell^{\, 1}(G)''$ and every $s \in G$.  It is clear that $\delta_s \Box \Phi \Box \Psi = \theta(\delta_s) \Box \Phi \Box \Psi$ for every $\Psi \in \ell^{\, 1}(G)''$. Since $J$ is an ideal and $\theta''(\Phi) \in J$, we  have $\theta''(\Psi \Box \Phi) = \theta''(\Psi) \Box \theta''(\Phi) \in J$ and $\theta''(\Phi \Box \Psi) = \theta''(\Phi) \Box \theta''(\Psi) \in J$ for every $\Psi \in \ell^{\, 1}(G)''$. Finally, we note that $I(\theta, J)$ is clearly a linear space. We have shown that $I(\theta, J)$ is an ideal in $\ell^{\, 1}(G)''$.
\end{proof}

\begin{lemma} \label{5.3}
 Let $G, \theta$ and $J$ be as in Definition \ref{5.1}. Then:
\begin{enumerate}
\item[{\rm (i)}] if $\Phi \in I(\theta, J)$ and $\Psi \in \ell^{\, 1}(G)'',$ then $\Psi \Box \Phi = \theta''(\Psi) \Box \Phi$;
\item[{\rm (ii)}] if $J$ is nilpotent of index $n$, then $I(\theta, J)$ is nilpotent of index at most $n+1$.
\end{enumerate}
\end{lemma}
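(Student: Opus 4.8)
The plan for part (i) is to exploit the weak-$*$ continuity properties of the first Arens product together with Goldstine's theorem. First I would record that, by the defining relation $\delta_s \Box \Phi = \theta(\delta_s) \Box \Phi \ (s \in G)$, linearity, and the norm-continuity of multiplication, one has $f \Box \Phi = \theta(f) \Box \Phi$ for every $f \in \ell^{\, 1}(G)$; since $\theta''$ extends $\theta$ under the canonical embedding, this reads $f \Box \Phi = \theta''(f) \Box \Phi$. The idea is then to observe that both maps $\Psi \mapsto \Psi \Box \Phi$ and $\Psi \mapsto \theta''(\Psi) \Box \Phi$ are weak-$*$ continuous: the former because, for the first Arens product, right multiplication by the fixed element $\Phi$ is weak-$*$ continuous; the latter because it is this same map precomposed with $\theta''$, which is weak-$*$ continuous, being the adjoint of $\theta' \colon \ell^{\, 1}(G)' \rightarrow \ell^{\, 1}(G)'$. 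These two weak-$*$ continuous maps agree on the copy of $\ell^{\, 1}(G)$, which is weak-$*$ dense in $\ell^{\, 1}(G)''$ by Goldstine's theorem, and so they agree everywhere, yielding (i).

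The subtle point, and the step I expect to require the most care, is to keep the varying factor in the correct slot. The identity to be proved has $\Psi$ as the \emph{left} factor, so it is essential to invoke the weak-$*$ continuity of right multiplication by $\Phi$; the analogous statement for left multiplication fails in general for the first Arens product, and any attempt to pass to a weak-$*$ limit in the right-hand factor would break down. Once this is recognised, the remainder of (i) is a routine density argument.

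For part (ii), I would deduce nilpotence of $I(\theta, J)$ from (i) by a peeling argument. The plan is to take $\Phi_1, \dots, \Phi_{n+1} \in I(\theta, J)$ and to analyse $P = \Phi_1 \Box \cdots \Box \Phi_{n+1}$. Since $I(\theta, J)$ is a (left) ideal by Proposition \ref{5.2}, each tail $\Phi_k \Box \cdots \Box \Phi_{n+1}$ again lies in $I(\theta, J)$, as it ends in $\Phi_{n+1} \in I(\theta, J)$. Applying (i) repeatedly, with each such tail playing the role of the right factor, and using associativity together with the fact (already used in Proposition \ref{5.2}) that $\theta''$ is an algebra homomorphism for $\Box$, I would successively replace $\Phi_1, \dots, \Phi_n$ by $\theta''(\Phi_1), \dots, \theta''(\Phi_n)$ to obtain
$$P = \theta''(\Phi_1) \Box \cdots \Box \theta''(\Phi_n) \Box \Phi_{n+1}.$$
Since $\theta''(\Phi_i) \in J$ for each $i$ by definition of $I(\theta, J)$, the product of the first $n$ factors lies in $J^n = \{ 0 \}$, whence $P = 0$. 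This shows $I(\theta, J)^{n+1} = \{ 0 \}$, giving the bound on the index. The only bookkeeping to check is that each tail genuinely remains in $I(\theta, J)$ so that (i) is applicable at every stage, which is precisely where the ideal property established in Proposition \ref{5.2} is used.
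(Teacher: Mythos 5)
Your proposal is correct and takes essentially the same approach as the paper: part (i) is the same argument (the paper phrases it via the identity for the point masses $\delta_s$, linearity, and weak-$*$ continuity of $\theta''$ and of right multiplication by $\Phi$, which is exactly what your Goldstine density argument spells out), and part (ii) is the same peeling idea. The only cosmetic difference is that in (ii) the paper applies (i) just once, with $\Psi = \Phi_1 \Box \cdots \Box \Phi_n$ and right factor $\Phi_{n+1}$, and then expands $\theta''(\Phi_1 \Box \cdots \Box \Phi_n) = \theta''(\Phi_1) \Box \cdots \Box \theta''(\Phi_n)$ by multiplicativity of $\theta''$, whereas you apply (i) repeatedly to the tails, which additionally invokes Proposition \ref{5.2} to keep each tail inside $I(\theta, J)$; both routes are valid.
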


\begin{proof}
(i) This follows from the identity $\delta_s \Box \Phi = \theta''(\delta_s) \Box \Phi$, and the fact that $\theta''$ is linear and weak-* continuous.

(ii) Given $\Phi_1, \ldots, \Phi_{n+1} \in I(\theta, J),$ we have 
\begin{align*}
\Phi_1 \Box \cdots \Box \Phi_{n+1} &= \theta''(\Phi_1 \Box \cdots \Box \Phi_n) \Box \Phi_{n+1} \\
 &= \theta''(\Phi_1) \Box  \cdots \Box \theta''(\Phi_n) \Box \Phi_{n+1} = 0
\end{align*}
because $\theta''(\Phi_1), \ldots, \theta''(\Phi_n) \in J$. As $\Phi_1, \ldots, \Phi_{n+1}$ were arbitrary, this shows that $I(\theta, J) ^{\Box (n+1)} = \{ 0 \}$.
\end{proof}

The key idea in the proof of Theorem \ref{1.3} is to use invariant means coming from each of the copies of $\Z$ in the direct sum to build more complicated radical elements in $\ell^{\, 1}( \oplus_{i=1}^\infty \Z)''$.  We shall use the following lemma. Recall that, for a group $G$ with subgroups $N$ and $H$, where $N$ is normal in $G$, we say that \textit{$N$ is complemented by $H$} if $H \cap N = \{ e \}$ and $G=HN$. In this case every element of $G$ may be written uniquely as $hn$, for some $h \in H$ and some $n \in N$, and the map $G \rightarrow G$ defined by $hn \mapsto h$ is a group homomorphism.

\begin{lemma} \label{5.4}
Let $G$ be a group with a normal, amenable subgroup $N$ which is complemented by a subgroup $H$. Let $\pi \colon \ell^{\, 1}(G) \rightarrow \ell^{\, 1}(G)$  be the bounded algebra homomorphism defined by $\pi(\delta_{hn}) = \delta_h \ (h \in H, n \in N)$  and let $\iota \colon \ell^{\, 1}(N) \rightarrow \ell^{\, 1}(G)$
 denote the inclusion map. Let $M$ be an invariant mean on $\ell^{\, \infty}(N),$ and write $\widetilde{M} = \iota''(M)$. Then $\widetilde{M}$ satisfies:
\begin{align}
&\delta_s \Box \widetilde{M} = \pi(\delta_s) \Box \widetilde{M} \quad (s \in G); \label{eq5.10} \\
&\pi''(\widetilde{M}) = \delta_e \label{eq5.11}.
\end{align}
\end{lemma}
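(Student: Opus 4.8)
The plan is to deduce both identities from functoriality of the first Arens product applied to the homomorphisms $\iota$ and $\pi\circ\iota$, after first reducing \eqref{eq5.10} to the single invariance relation $\delta_n \Box \widetilde{M} = \widetilde{M}$ for $n \in N$. For the reduction, I would fix $s \in G$ and write it uniquely as $s = hn$ with $h \in H$, $n \in N$, so that $\pi(\delta_s) = \delta_h$ and $\delta_s = \delta_h \Box \delta_n$ (this last equality being just convolution inside $\ell^{\,1}(G) \subset \ell^{\,1}(G)''$). Then, by associativity of $\Box$,
\[
\delta_s \Box \widetilde{M} = \delta_h \Box (\delta_n \Box \widetilde{M}),
\]
so that \eqref{eq5.10} follows immediately from the claim $\delta_n \Box \widetilde{M} = \widetilde{M}$ for every $n \in N$.

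I would then prove this claim. Since $\iota \colon \ell^{\,1}(N) \to \ell^{\,1}(G)$ is a bounded algebra homomorphism, its bidual $\iota''$ is a homomorphism of $(\ell^{\,1}(N)'', \Box)$ into $(\ell^{\,1}(G)'', \Box)$; this is the same functoriality of the first Arens product already used in the proof of Proposition \ref{5.2}. Moreover $\iota''$ extends $\iota$, so that $\iota''(\delta_n) = \delta_n$ and $\iota''(M) = \widetilde{M}$. Regarding $M$ as an element of $\ell^{\,1}(N)'' = \ell^{\,\infty}(N)'$, its being an invariant mean means precisely that $\delta_n \Box M = M$ in $\ell^{\,1}(N)''$ for $n \in N$. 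Applying the homomorphism $\iota''$ gives
\[
\delta_n \Box \widetilde{M} = \iota''(\delta_n) \Box \iota''(M) = \iota''(\delta_n \Box M) = \iota''(M) = \widetilde{M},
\]
which establishes the claim, and hence \eqref{eq5.10}.

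For \eqref{eq5.11} I would compute the composite $\pi \circ \iota \colon \ell^{\,1}(N) \to \ell^{\,1}(G)$. For $n \in N$ we have $(\pi\circ\iota)(\delta_n) = \pi(\delta_{en}) = \delta_e$, so by linearity and continuity $(\pi\circ\iota)(f) = \langle f, 1\rangle \delta_e$ for all $f \in \ell^{\,1}(N)$, where $1 \in \ell^{\,\infty}(N)$ is the constant function. A direct double dualisation of this rank-one map yields $(\pi\circ\iota)''(\Phi) = \langle \Phi, 1\rangle \delta_e$ for $\Phi \in \ell^{\,1}(N)''$. Since $(\pi\circ\iota)'' = \pi'' \circ \iota''$, and $M$ is a mean, so that $\langle M, 1\rangle = \Vert M \Vert = 1$ (the unweighted case of Lemma \ref{0}), we obtain $\pi''(\widetilde{M}) = (\pi\circ\iota)''(M) = \langle M, 1\rangle \delta_e = \delta_e$, which is \eqref{eq5.11}.

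The only genuinely non-routine ingredient is the multiplicativity of $\iota''$ for the first Arens product, which is where the whole argument turns; everything else is bookkeeping with the decomposition $G = HN$ and the defining properties of a mean. I would take care to state the invariance of $M$ in the correct left-hand form $\delta_n \Box M = M$, so that it matches the left multiplications appearing in \eqref{eq5.10}, and I would note that the normality of $N$ enters only through the standing hypothesis that $\pi$ is a homomorphism, not through either computation above.
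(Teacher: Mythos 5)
Your proof is correct and takes essentially the same approach as the paper: for \eqref{eq5.10} both arguments decompose $s=hn$ and transport the invariance $\delta_n \Box M = M$ through $\iota''$ (the paper verifies the needed identity $\delta_n \Box \iota''(\Phi) = \iota''(\delta_n \Box \Phi)$ directly via weak-* limits, whereas you cite multiplicativity of $\iota''$ for $\Box$, a fact the paper itself invokes in Proposition \ref{5.2}, so this is only a cosmetic difference). For \eqref{eq5.11} both proofs identify $\pi \circ \iota$ with the rank-one map $f \mapsto \langle f, 1 \rangle \delta_e$ and pass to biduals, using $\langle M, 1 \rangle = 1$.
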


\begin{proof}
For every  $n \in N$ and every $f \in \ell^{\, 1}(N),$ we have $\delta_n * \iota(f) = \iota(\delta_n*f)$, and so, by taking weak-* limits, we see that $\delta_n \Box \iota''(\Phi) = \iota''(\delta_n \Box \Phi)$ for all $\Phi \in \ell^{\, 1}(N)''$. An arbitrary element $s \in G$ may be written as $s = hn$ for some $h \in H$ and $n \in N$, and so 
\begin{align*}
\delta_s \Box \widetilde{M} &= \delta_h \Box \delta_n \Box \iota''(M) = \delta_h  \Box \iota''(\delta_n \Box M) \\
&= \delta_h \Box \iota''(M) = \pi(\delta_s) \Box \widetilde{M}.
\end{align*}
Hence \eqref{eq5.10} holds.

Define $\varphi_0 \colon \ell^{\, 1}(N) \rightarrow \ell^{\, 1}(N)$ by $\varphi_0 \colon f \mapsto \langle f, 1 \rangle \delta_e \ (f \in \ell^{\, 1}(N))$. It is easily verified that $\pi \circ \iota = \iota \circ \varphi_0$, and so $\pi'' \circ \iota'' = \iota'' \circ \varphi_0''.$ 
We also have $\varphi_0'' (\Phi) = \langle \Phi, 1 \rangle \delta_e  \ (\Phi \in \ell^{\, 1}(N)'')$. Hence 
$$\pi''(\widetilde{M}) = (\pi'' \circ \iota'')(M) = (\iota''\circ \varphi_0'')(M) = \iota''(\langle M, 1 \rangle \delta_e) = \delta_e,$$
establishing \eqref{eq5.11}.
\end{proof}

We have not seen in the literature any instance of a discrete, non-amenable group $G$ for which it is known that $\rad(\ell^{\, 1}(G)'') \neq \{ 0 \}$. However the next corollary gives a large class of easy examples of such groups.

\begin{corollary} \label{5.5}
Let $G$ be a group with an infinite, amenable, complemented, normal subgroup  $N$. Then $\vert \rad (\ell^{\, 1}(G)'') \vert \geq 2^{2^{\vert N \vert}}.$
\end{corollary}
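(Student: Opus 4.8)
The plan is to exhibit $2^{2^{\vert N\vert}}$ distinct elements of $\rad(\ell^{\,1}(G)'')$ by combining Lemma \ref{5.4} with the ideal machinery of Lemma \ref{5.3}. First I would apply Lemma \ref{5.4} with $H$ a complement of $N$ in $G$ and $\pi,\iota$ as defined there. The crucial point is that an \emph{infinite} amenable group $N$ admits many distinct invariant means: indeed $\vert N\vert$ infinite forces $\ell^{\,\infty}(N)'$ to have a space of invariant means of cardinality at least $2^{2^{\vert N\vert}}$ (this is the classical counting of invariant means on an infinite amenable group). For each such invariant mean $M$ on $\ell^{\,\infty}(N)$, Lemma \ref{5.4} produces $\widetilde M = \iota''(M)$ satisfying $\delta_s \Box \widetilde M = \pi(\delta_s)\Box\widetilde M$ for all $s\in G$, and $\pi''(\widetilde M) = \delta_e$.

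Next I would observe that the \emph{difference} of two such means lands in the radical. Set $\theta = \pi$ and take $J = \{0\}$, so that $I(\pi, \{0\})$ consists of those $\Phi$ with $\delta_s\Box\Phi = \pi(\delta_s)\Box\Phi$ and $\pi''(\Phi)=0$. By Proposition \ref{5.2} this is an ideal, and since $J=\{0\}$ is nilpotent of index $1$, Lemma \ref{5.3}(ii) gives $I(\pi,\{0\})^{\Box 2} = \{0\}$. A nilpotent ideal is contained in the quasi-nilpotents, and since $I(\pi,\{0\})$ is a two-sided ideal with $I(\pi,\{0\})^{\Box 2}=\{0\}$, for any $a\in I(\pi,\{0\})$ and any $b\in \ell^{\,1}(G)''$ we have $(ba)^{\Box 2}\in I(\pi,\{0\})^{\Box 2}=\{0\}$, whence $ba$ is nilpotent hence quasi-nilpotent; by the characterisation of the radical recalled in the Background section, this forces $I(\pi,\{0\})\subset\rad(\ell^{\,1}(G)'')$. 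Now for two invariant means $M_1, M_2$ on $\ell^{\,\infty}(N)$ the element $\widetilde{M_1}-\widetilde{M_2}$ satisfies \eqref{eq5.10} by linearity, and $\pi''(\widetilde{M_1}-\widetilde{M_2}) = \delta_e - \delta_e = 0$ by \eqref{eq5.11}, so $\widetilde{M_1}-\widetilde{M_2}\in I(\pi,\{0\})\subset\rad(\ell^{\,1}(G)'')$.

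Finally I would verify that this produces the claimed cardinality. The map $M\mapsto\widetilde M=\iota''(M)$ is injective on means: since $\iota$ is the inclusion of $\ell^{\,1}(N)$ into $\ell^{\,1}(G)$, its transpose $\iota'$ restricts $\ell^{\,\infty}(G)$-functionals to $\ell^{\,\infty}(N)$ and is surjective, so $\iota''$ is injective, and distinct invariant means $M_1\neq M_2$ yield distinct $\widetilde{M_1}\neq\widetilde{M_2}$ and hence distinct radical elements $\widetilde{M_1}-\widetilde{M_0}$ as $M_1$ ranges over the invariant means (fixing a base mean $M_0$). Thus $\rad(\ell^{\,1}(G)'')$ contains at least as many elements as there are invariant means on $\ell^{\,\infty}(N)$, giving $\vert\rad(\ell^{\,1}(G)'')\vert\geq 2^{2^{\vert N\vert}}$.

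I expect the main obstacle to be pinning down the lower bound $2^{2^{\vert N\vert}}$ on the number of invariant means on an infinite amenable group, which is the one genuinely non-trivial input; everything else is a formal consequence of the ideal structure already established. This count is standard in the amenability literature (for countable $N$ it gives $2^{\mathfrak{c}}$, matching the non-separability results cited in the introduction for the amenable case), so in the write-up I would invoke it as a known fact rather than reprove it, and concentrate the argument on the injectivity of $\iota''$ and the containment in the radical.
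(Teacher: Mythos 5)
Your proposal is correct and takes essentially the same approach as the paper: differences $\iota''(M_1-M_2)$ of invariant means land in the nilpotent ideal $I(\pi,0)$ by Lemma \ref{5.4} and Lemma \ref{5.3}(ii), hence in the radical, and the cardinality comes from the injectivity of $\iota''$ together with the classical count of $2^{2^{\vert N \vert}}$ invariant means on an infinite amenable group, which is precisely the result \cite[Theorem 7.26]{P} that the paper cites. The steps you spell out explicitly (a nilpotent two-sided ideal lies in the radical; surjectivity of the restriction map $\iota'$ gives injectivity of $\iota''$) are exactly what the paper's proof leaves implicit.
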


\begin{proof}
Let $M_1, M_2$ be two invariant means in $\ell^{\, 1}(N)''$, and let $\iota$ and $\pi$ be as in Lemma \ref{5.4} . Then by that lemma $\iota''(M_1 - M_2) \in I(\pi,0)$, which is a nilpotent ideal by Lemma \ref{5.3}(ii). The result now follows from the injectivity of $\iota''$ and \cite[Theorem 7.26]{P}.
\end{proof}

We now prove our main theorem.

\begin{proof}[Proof of Theorem \ref{1.3}]
Let $G = \oplus_{i=1}^\infty \Z$, and, given $i \in \N$, write $G_i$ for the $i^{\rm th}$ copy of $\Z$ appearing in this direct sum. 
 Let $\pi_i \colon G \rightarrow G$ be the homomorphism which `deletes' the $i^{\rm \, th}$ coordinate, that is
$$ \pi_i \colon (n_1, n_2, \ldots) \mapsto (n_1, \ldots, n_{i-1}, 0, n_{i+1}, \ldots ).$$
Each map $\pi_i$ gives rise to a bounded homomorphism $\ell^{\, 1}(G) \rightarrow \ell^{\, 1}(G)$, which we also denote by $\pi_i$, given by
$$\pi_i \colon f \mapsto \sum_{s \in G} f(s) \delta_{\pi_i(s)} \quad (f \in \ell^{\, 1}(G)).$$
Similarly, we write $\iota_i \colon G_i \rightarrow G$ for the inclusion map of groups, and $\iota_i \colon \ell^{\, 1}(G_i) \rightarrow \ell^{\, 1}(G)$ for the inclusion of algebras which it induces.

Define a sequence of ideals $I_j$ in $\ell^{\, 1}(G)''$ by $I_1 = I(\pi_1, 0)$ and 
$$I_j = I(\pi_j, I_{j-1}) \quad (j \geq 2).$$
By Lemma \ref{5.3}(ii), each $I_j$ is nilpotent of index at most $j+1$ and the strategy of the proof is to show that the index is exactly $j+1$. 

Fix a free ultrafilter $\mathcal{U}$ on $\N$. Given $i \in \N$ and $n \in \Z$ we write
$$\delta^{(i)}_n = \delta_{(0, \ldots, 0, n, 0, \ldots)},$$
 where $n$ appears in the $i^{\rm \, th}$ place.
 Given $j \in \N$ we define elements $\sigma_j, M_j \in \ell^{\, 1}(G)''$ to be the weak-* limits $\sigma_j = \lim_{k \rightarrow \mathcal{U}} \sigma_{j, k}$ and $M_j = \lim_{k \rightarrow \mathcal{U}} M_{j,k}$, where
$$M_{j,k} = \frac{1}{k} \sum_{i=1}^k \delta_i^{(j)} \quad (j, k \in \N),$$
and
$$\sigma_{j, k} = \frac{1}{k} \sum_{i=1}^k \left( \delta_i^{(j)} - \delta_{-i}^{(j)} \right) \quad (j, k \in \N).$$

We claim that, for each $j \in \N$, $M_j$ and $\sigma_j$ satisfy:
\begin{align}
&\delta_s \Box M_j = \pi_j(\delta_s) \Box M_j  \ (s \in G); \label{eq5.4} \\ 
&\pi_j''(M_j) = \delta_{(0,0, \ldots)}; \label{eq5.5} \\
&\pi_i''(\sigma_j) = \sigma_j \text{ and } \pi_i''(M_j) = M_j \ (i \neq j); \label{eq5.6} \\
&\sigma_j \in I(\pi_j, 0). \label{eq5.7}
\end{align}
Since $ \pi_i''\left(  M_{j,k}  \right) =  M_{j,k} $ and $\pi_i''\left( \sigma_{j, k} \right) = \sigma_{j, k} \ (k \in \N, i\neq j)$, \eqref{eq5.6} follows from the weak-* continuity of $\pi_j''$. We observe that $M_j$ is the image of an invariant mean on $\Z$ under $\iota_j''$, so that we may apply Lemma \ref{5.4} to obtain \eqref{eq5.4} and \eqref{eq5.5}. Similarly, $\sigma_j$ is the image of the difference of two invariant means on $\Z$ under $\iota_j$, so that Lemma \ref{5.4} implies that  $\delta_s \Box \sigma_j = \pi_j(\delta_s) \Box \sigma_j \ (s \in G)$ and $\pi_j'' \left(\sigma_j \right) = 0$, so that \eqref{eq5.7} holds.

We demonstrate that  
\begin{equation} \label{eq5.8}
\sigma_1 \Box \sigma_2 \Box \cdots \Box \sigma_j \neq 0 \quad (j \in \N).
\end{equation}
 To see this, define $h \in \ell^{\,  \infty}(G)$ by
\begin{align*}
 h(n_1, n_2, \ldots) = 
\begin{cases}
1 \qquad \text{if } n_i \geq 0 \text{ for all } i \in \N \\
0 \qquad \text{otherwise.}
\end{cases}
\end{align*}
Clearly, we have
\begin{equation} \label{eq5.13}
\langle \sigma_{i, k}, h \rangle = 1 \quad (i, k \in \N).
\end{equation}
It is easily checked that $\langle \pi_i(\delta_s)*\iota_i(\delta_t), h \rangle = \langle \pi_i(\delta_s), h \rangle \langle \iota_i(\delta_t), h \rangle$ for every $s \in G$ and $t \in G_i$, and it follows from this that 
\begin{equation} \label{eq5.12}
\langle \pi_i(f)*\iota_i(g), h \rangle = \langle \pi_i(f), h \rangle \langle \iota_i(g), h \rangle
\quad (f \in \ell^{\, 1}(G), g \in \ell^{\, 1}(G_i)).
\end{equation}
Given $i, k \in \N$, the element $\sigma_{i,k}$ belongs to the image of $\iota_i$, and, together with \eqref{eq5.6}, \eqref{eq5.12} and \eqref{eq5.13}, this allows us to conclude that, for all $k_1, \ldots, k_j \in \N$, we have
\begin{align*}
\langle \sigma_{1,k_1}*\sigma_{2, k_2}&* \cdots *\sigma_{j, k_j}, h \rangle \\
&= \langle \pi_j(\sigma_{1,k_1}* \cdots *\sigma_{j-1, k_{j-1}})*\sigma_{j, k_j}, h \rangle\\
&=\langle \pi_j(\sigma_{1,k_1}* \cdots *\sigma_{j-1, k_{j-1}}),
 h \rangle \langle \sigma_{j, k_j}, h \rangle \\
&= \langle \pi_{j-1}(\sigma_{1,k_1}* \cdots *\sigma_{j-2, k_{j-2}})*\sigma_{j-1, k_{j-1}},
 h \rangle \langle \sigma_{j, k_j}, h \rangle = \cdots \\
&= \langle \sigma_{1,k_1}, h \rangle \langle \sigma_{2, k_2}, 
h \rangle \cdots \langle \sigma_{j, k_j}, h \rangle = 1.
\end{align*}
Therefore
$$\langle \sigma_1 \Box \sigma_2 \Box \cdots \Box \sigma_j, h \rangle
= \lim_{\underline{k} \rightarrow \mathcal{U}}{}^{(j)} 
\left \langle \sigma_{1,k_1} * \sigma_{2, k_2}* \cdots *\sigma_{j, k_j}, h \right \rangle = 1.$$
Equation \eqref{eq5.8} follows.

We now come to the main argument of the proof. We recursively define $\Lambda_j \in \ell^{\, 1}(G)''$ by $\Lambda_1 = \sigma_1$ and 
$$\Lambda_j = M_j \Box \Lambda_{j-1} + \sigma_j \quad (j \geq 2).$$
 We shall show inductively that each $\Lambda_j$ satisfies:
\begin{align}
&\Lambda_j \in I_j; \label{eq5.1} \\
&\Lambda_j^{\Box j} = \sigma_1 \Box \sigma_2  \Box \cdots \Box \sigma_j; \label{eq5.2} \\
&\pi_i''(\Lambda_j) = \Lambda_j \ (i > j) \label{eq5.3}.
\end{align}
Since by Lemma \ref{5.3}(ii) $I_j^{\Box (j+1)} = \{ 0 \},$ and by \eqref{eq5.8} $\sigma_1 \Box \sigma_2  \Box \cdots \Box \sigma_j \neq 0,$ this will give the result. The base case of the induction holds by \eqref{eq5.7} and \eqref{eq5.6}.

Now assume that the  hypothesis holds up to $j-1$. It follows from \eqref{eq5.7} and \eqref{eq5.4} that $\delta_s \Box \Lambda_j = \pi_j(\delta_s) \Box \Lambda_j \ (s \in G)$. Moreover, by \eqref{eq5.5}, \eqref{eq5.7}, and \eqref{eq5.3} applied to $\Lambda_{j-1}$, we have
\begin{align} \label{eq5.9}
\pi_j''(\Lambda_j) = \pi_j''(M_j) \Box \pi_j''(\Lambda_{j-1})+\pi_j''(\sigma_j) = \Lambda_{j-1},
\end{align}
so that, by the induction hypothesis, $\pi_j''(\Lambda_j) \in I_{j-1}$. Hence \eqref{eq5.1} holds. We see that \eqref{eq5.3} holds for a given $i >j$ because it holds for each of $M_j, \sigma_j$ and $\Lambda_{j-1}$ by \eqref{eq5.6} and the induction hypothesis. Finally, we verify \eqref{eq5.2}:
\begin{align*}
\Lambda_j^{\Box j} &= \pi_j''(\Lambda_j)^{\Box(j-1)} \Box \Lambda_j = \Lambda_{j-1}^{\Box (j-1)} \Box \Lambda_j \\
&= \Lambda_{j-1}^{\Box (j-1)} \Box M_j \Box \Lambda_{j-1} + \Lambda_{j-1}^{\Box (j-1)} \Box \sigma_j  
= \sigma_1 \Box \sigma_2 \Box \cdots \Box \sigma_{j-1} \Box \sigma_j,
\end{align*}
where we have used Lemma \ref{5.3}(i) and \eqref{eq5.9} in the first line, and the fact that $I_{j-1}^{\Box j} = \{ 0 \}$ in the second line to get $\Lambda_{j-1}^{\Box (j-1)} \Box M_j \Box \Lambda_{j-1} =0.$ 

This completes the proof.
\end{proof}

\textit{Remark.} A simpler version of the above argument shows that $\ell^{\, 1}(\Z^2)''$ contains a radical element which is nilpotent of index 3, which is enough to resolve Dales and Lau's question of whether the radical of $L^{\, 1}(G)''$, for $G$ a locally compact group, always has zero square \cite[Chapter 14, Question 3]{DL}. Specifically, this may be achieved by terminating the induction at $j=2$, and otherwise making trivial alterations.

\begin{corollary}
The radical of $\ell^{\, 1}(\oplus_{i=1}^\infty \Z)''$ contains non-nilpotent elements.
\end{corollary}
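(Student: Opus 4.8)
The plan is to deduce this from Theorem~\ref{1.3} together with a theorem of Grabiner \cite{Gr69}, which asserts that every Banach nil algebra --- that is, a Banach algebra in which every element is nilpotent --- is itself nilpotent, so that $A^{\Box n} = \{0\}$ for some $n \in \N$. Write $G = \oplus_{i=1}^\infty \Z$ and set $R = \rad(\ell^{\, 1}(G)'')$. As recorded in Section~2, $R$ is a closed, two-sided ideal of $\ell^{\, 1}(G)''$, and hence, when equipped with the first Arens product, it is a Banach algebra in its own right, to which Grabiner's theorem applies.

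First I would observe that $R$ is \emph{not} nilpotent as an algebra. Indeed, by Theorem~\ref{1.3}, for every $n \in \N$ there exists $\Lambda \in R$ which is nilpotent of index $n$, so that $\Lambda^{\Box(n-1)} \neq 0$. Were we to have $R^{\Box N} = \{0\}$ for some $N \in \N$, then in particular $\Lambda^{\Box N} = 0$ for every $\Lambda \in R$, forcing the index of each element of $R$ to be at most $N$; this contradicts the existence of an element of index $N+1$. Hence no such $N$ exists.

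It then remains to argue by contraposition. Suppose, towards a contradiction, that every element of $R$ were nilpotent. Then $R$ would be a Banach nil algebra, and so, by Grabiner's theorem, it would be nilpotent, i.e.\ $R^{\Box N} = \{0\}$ for some $N \in \N$. This contradicts the preceding paragraph. Therefore $R$ must contain an element that is not nilpotent, which is the assertion of the corollary.

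The argument is short, and its substantive inputs are Theorem~\ref{1.3} and Grabiner's theorem; I do not anticipate a genuine obstacle. The only points requiring a moment's care are the passage from ``$R$ contains nilpotent elements of every index'' to ``$R$ is not a nilpotent algebra'', and the verification that $R$ really is a Banach algebra so that Grabiner's result is applicable --- both of which follow at once from the definitions and from the fact that the radical is a closed ideal.
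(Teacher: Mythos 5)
Your proof is correct and follows essentially the same route as the paper: both invoke Grabiner's theorem that a Banach nil algebra is nilpotent (hence admits a uniform bound on nilpotency indices) and derive a contradiction with Theorem~\ref{1.3}. Your write-up merely makes explicit the points the paper leaves implicit, namely that $\rad(\ell^{\,1}(\oplus_{i=1}^\infty \Z)'')$ is a closed ideal and hence a Banach algebra to which Grabiner's theorem applies.
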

\begin{proof}
 By a theorem of Grabiner \cite{Gr69}, if every element of $\rad(\ell^{\, 1}(\oplus_{i=1}^\infty \Z)'')$ were nilpotent, then there would be a uniform bound on the index of nilpotency. Hence, by Theorem \ref{1.3}, $\rad(\ell^{\, 1}(\oplus_{i=1}^\infty \Z)'')$ must contain non-nilpotent elements.
\end{proof}

\section{A Weight $\omega$ for Which $\rad(\ell^{\, 1}(\Z, \omega)'')$ is Not Nilpotent}
\noindent
In this section we shall prove Theorem \ref{1.2}. Given a weight $\omega$ on $\Z$ and $r \in \N$, we define 
$\Omega_\omega^{\, (r)} \colon \Z^r \rightarrow (0, 1]$ 
by
$$\Omega_\omega^{\, (r)}(n_1,\ldots, n_r) = \frac{\omega(n_1+n_2+ \cdots +n_r)}{\omega(n_1)\omega(n_2) \cdots \omega(n_r)} \quad (n_1, \ldots, n_r \in \Z)$$
(compare with \cite[Equation 8.7]{DL}). Often we simply write $\Omega^{\, (r)}$ when the weight $\omega$ is clear. As in Section 3 we write $A_\omega = \ell^{\, 1}(\Z, \omega)$.

Our main tool will be Proposition \ref{4.1}. In what follows, the unit ball of a Banach space $E$ is denoted by $B_E$.

\begin{proposition} \label{4.1}	
Let $\omega$ be a weight on $\Z$ and suppose that there is some sequence $(n_k) \subset \Z$ such that 
\begin{equation} \label{eq4.1a}
\liminf_{r \rightarrow \infty} \limsup_{\underline{k} \rightarrow \infty} {}^{(r)} \left[ \Omega^{\,  (r)}(n_{k_1}, \ldots, n_{k_r}) \right]^{1/r}  = 0.
\end{equation}
Let $\Phi$ be a weak-* accumulation point of $\{ \delta_{n_k}/\omega(n_k) : k \in \N \}$. Then $\Phi \in \rad(A_\omega'')\setminus \{ 0 \}.$
\end{proposition}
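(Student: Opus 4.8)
The plan is to realise $\Phi$ as an ultrafilter limit, to see that $\Phi \neq 0$ by testing against the weight itself, and then to place $\Phi$ in the radical via the characterisation $\rad(A_\omega'') = \{ \Phi \in A_\omega'' : \Phi \Box \Psi \in \mathcal{Q}(A_\omega'') \ (\Psi \in A_\omega'') \}$ recorded in Section 2. Since $\{ \delta_{n_k}/\omega(n_k) : k \in \N \}$ lies in the weak-* compact unit ball of $A_\omega''$ and $\Phi$ is an accumulation point of it, I would first fix a free ultrafilter $\mathcal{U}$ on $\N$ with $\Phi = \lim_{k \to \mathcal{U}} \delta_{n_k}/\omega(n_k)$. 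Noting that $\omega \in \ell^{\, \infty}(\Z, 1/\omega)$ with $\Vert \omega \Vert_{\infty, \omega} = 1$ and that $\langle \delta_{n_k}/\omega(n_k), \omega \rangle = 1$ for every $k$, weak-* continuity gives $\langle \Phi, \omega \rangle = 1$, so $\Phi \neq 0$.

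The heart of the argument is a norm estimate for \emph{interspersed} products. Fix $\Psi_1, \ldots, \Psi_{r-1} \in A_\omega''$ with $\Vert \Psi_j \Vert \leq 1$, and use Goldstine's theorem to choose nets $(b_\beta^{(j)})$ in $B_{A_\omega}$ converging weak-* to $\Psi_j$. Expressing the product $\Phi \Box \Psi_1 \Box \Phi \Box \cdots \Box \Psi_{r-1} \Box \Phi$, containing $r$ copies of $\Phi$, as the iterated weak-* limit of the elements $\frac{\delta_{n_{k_1}}}{\omega(n_{k_1})} b_{\beta_1}^{(1)} \cdots \frac{\delta_{n_{k_r}}}{\omega(n_{k_r})}$ and testing against an arbitrary $g \in B_{\ell^{\, \infty}(\Z, 1/\omega)}$, each approximant is a sum over group elements of the form $n_{k_1} + m_1 + \cdots + m_{r-1} + n_{k_r}$. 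Using $\vert g(s) \vert \leq \omega(s)$ together with submultiplicativity in the grouped form $\omega\bigl(\sum_i n_{k_i} + \sum_j m_j\bigr) \leq \omega\bigl(\sum_i n_{k_i}\bigr) \prod_j \omega(m_j)$, the arbitrary factors are absorbed into $\prod_j \Vert b_\beta^{(j)} \Vert_\omega \leq 1$, leaving the bound $\Omega^{\,(r)}(n_{k_1}, \ldots, n_{k_r})$ for every approximant. As this bound depends neither on the $b_\beta^{(j)}$ nor on $g$, passing to the iterated limit yields
$$\Vert \Phi \Box \Psi_1 \Box \cdots \Box \Psi_{r-1} \Box \Phi \Vert \leq \limsup_{\underline{k} \rightarrow \infty}{}^{(r)} \Omega^{\,(r)}(n_{k_1}, \ldots, n_{k_r}).$$

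To conclude, take any $\Psi \in A_\omega''$; scaling reduces to $\Vert \Psi \Vert \leq 1$. Writing $(\Phi \Box \Psi)^{\Box r} = (\Phi \Box \Psi \Box \Phi \Box \cdots \Box \Phi) \Box \Psi$ with $r$ copies of $\Phi$ inside the bracket, the displayed estimate gives $\Vert (\Phi \Box \Psi)^{\Box r} \Vert^{1/r} \leq \bigl[ \limsup_{\underline{k} \rightarrow \infty}{}^{(r)} \Omega^{\,(r)} \bigr]^{1/r} = \limsup_{\underline{k} \rightarrow \infty}{}^{(r)} [\Omega^{\,(r)}]^{1/r}$. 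Taking $\liminf$ over $r$ and invoking hypothesis \eqref{eq4.1a} shows that the spectral radius $\lim_r \Vert (\Phi \Box \Psi)^{\Box r} \Vert^{1/r}$ vanishes, so $\Phi \Box \Psi \in \mathcal{Q}(A_\omega'')$. As $\Psi$ was arbitrary, $\Phi \in \rad(A_\omega'')$, and combined with $\Phi \neq 0$ this finishes the proof.

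The main obstacle is precisely the interspersed-product estimate: quasi-nilpotence of $\Phi$ alone is insufficient, since radical membership requires controlling every $\Phi \Box \Psi$, whose powers interleave copies of $\Phi$ with arbitrary elements. The crux is the absorption step, where submultiplicativity lets one discard the arbitrary factors so that the bound depends only on the translation data $(n_{k_i})$, and hence only on $\Omega^{\,(r)}$. Some care is also needed to verify that the grouping is legitimate, that the iterated limit in the first Arens product is taken in the order matching the paper's notation, and that raising to the $1/r$-th power commutes with the nested $\limsup$.
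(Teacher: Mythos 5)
Your proposal is correct and takes essentially the same route as the paper's proof: realise $\Phi$ as a limit along a free (ultra)filter, bound the approximants of the $r$-th powers by $\Omega^{\,(r)}(n_{k_1},\ldots,n_{k_r})$ using commutativity of convolution on $\Z$ together with submultiplicativity of $\omega$, pass to iterated $\limsup$s, invoke \eqref{eq4.1a} to kill the spectral radius, and test against $\omega$ to get $\Phi \neq 0$. The only cosmetic differences are that you estimate $(\Phi \Box \Psi)^{\Box r}$ via interspersed products and the characterisation $\rad(A) = \{ a : ab \in \mathcal{Q}(A) \ (b \in A) \}$, whereas the paper works directly with $(\Psi \Box \Phi)^{\Box r}$ and the characterisation $\rad(A) = \{ a : ba \in \mathcal{Q}(A) \ (b \in A) \}$; both are recorded in Section 2.
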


\begin{proof}
There exists some free filter $\mathcal{U}$ on $\N$ such that 
$$\Phi = \lim_{k \rightarrow \mathcal{U}} \frac{1}{\omega(n_k)} \delta_{n_k},$$
 where the limit is taken in the weak-* topology. Let $\Psi \in B_{A_\omega''}$. Then there exists a net $(a_\alpha)$ in $B_{A_\omega}$ such that, in the weak-* topology, $\lim_{\alpha} a_\alpha = \Psi$.  Let $\lambda \in B_{A_\omega'}$. Then, for each $r \in \N$, we have
\begin{align*}
\vert  \langle (\Psi \Box \Phi)^{\Box r}, \lambda \rangle \vert 
  &= \lim_{\underline{\alpha} \rightarrow \infty, \, \underline{k} \rightarrow \mathcal{U}} {}^{(r)} \left \vert 
	\frac{\langle a_{\alpha_1}* \delta_{n_{k_1}}* \cdots *a_{\alpha_r} * \delta_{n_{k_r}}, \lambda \rangle}
	{\omega(n_{k_1}) \cdots \omega(n_{k_r})}  \right  \vert \\
&= \lim_{\underline{\alpha} \rightarrow \infty, \, \underline{k} \rightarrow \mathcal{U}} {}^{(r)} 
\left \vert \frac{\langle a_{\alpha_1}*  \cdots *a_{\alpha_r} * \delta_{n_{k_1}}* \cdots *\delta_{n_{k_r}}, \lambda \rangle }
{\omega(n_{k_1}) \cdots \omega(n_{k_r})}  \right \vert \\
&\leq \limsup_{\underline{k} \rightarrow \infty} {}^{(r)} \left \Vert \frac{\delta_{n_{k_1}+ \cdots +n_{k_r}}}{\omega(n_{k_1}) \cdots \omega(n_{k_r})} \right \Vert \\
&= \limsup_{\underline{k} \rightarrow \infty} {}^{(r)} \Omega^{\,  (r)}(n_{k_1}, \ldots, n_{k_r}).
\end{align*}
Hence
$$\Vert (\Psi \Box \Phi)^{\Box r} \Vert^{1/r} = \sup_{\lambda \in B_{A_\omega'}} \vert \langle (\Psi \Box \Phi)^{\Box r}, \lambda \rangle \vert ^{1/r} \leq  \limsup_{\underline{k} \rightarrow \infty} {}^{(r)} \left[ \Omega^{\,  (r)}(n_{k_1}, \ldots, n_{k_r}) \right]^{1/r},$$
and so
$\lim_{r \rightarrow \infty} \Vert (\Psi \Box \Phi)^{\Box r} \Vert^{1/r} = 0$ 
by \eqref{eq4.1a}.
Therefore $\Psi \Box \Phi \in \mathcal{Q}(A_\omega'')$. As $\Psi$ was arbitrary, it follows that $\Phi \in \rad(A_\omega'')$. Moreover, $\Phi \neq 0$ because
$$\langle \Phi, \omega \rangle = \lim_{k \rightarrow \mathcal{U}} \left\langle \frac{\delta_{n_k}}{\omega(n_k)}, \omega \right\rangle = 1.$$
This completes the proof.
\end{proof}

\textit{Remark.} In \cite[Example 9.17]{DL} Dales and Lau put forward a candidate for a weight $\omega$ such that $A_\omega''$ is semisimple. They attribute this weight to Feinstein. In the light of Theorem \ref{1.1} this cannot be the case, but in fact this can also be shown directly using Proposition \ref{4.1}, by taking the sequence $(n_k)$ to be the one, also called $(n_k),$ appearing in \cite[Example 9.17]{DL}.

\vskip3mm

Given a (possibly infinite) subset $S \subset \Z$ which generates $\Z$ as a group, we define the \textit{word-length with respect to $S$} of an integer $n$ to be
$$\vert n \vert_S = \min \left \{ r: n= \sum_{i=1}^r \eps_i s_i \text{, for some } s_1, \ldots, s_r \in S, \eps_1, \ldots, \eps_r \in \{ \pm 1 \} \right \}.$$
This is exactly the usual notion of word-length from group theory. Notice that, for any generating set $S$, the function 
$n \mapsto {\rm e}^{\vert n \vert_S}$
defines a weight on $\Z$.

The weight in Theorem \ref{1.2} will be defined as follows. We let  
$$S_0 = \{ 2^{k^2} : k \in \Z^+ \},$$
 set $\eta(n) = \vert n \vert_{S_0}$, and define our weight by $\omega(n) = {\rm e}^{\eta(n)} \ (n \in \Z).$ We also define a sequence of integers $(n_k)$ by 
$$n_k = 2^{k^2} + 2^{(k-1)^2} + \cdots + 1 \quad (k \in \N).$$

\begin{lemma} \label{4.2}	
We have  $\eta(n_k) = k+1 \ (k \in \N)$.
\end{lemma}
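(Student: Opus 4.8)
The bound $\eta(n_k) \le k+1$ is immediate: by definition $n_k = \sum_{j=0}^{k} 2^{j^2}$ is a sum of the $k+1$ generators $2^{0}, 2^{1}, 2^{4}, \ldots, 2^{k^2}$ of $S_0$, each taken with coefficient $+1$, so $|n_k|_{S_0} \le k+1$. All the work lies in the reverse inequality $\eta(n_k) \ge k+1$, and my first move would be to recast the word length in a more tractable form. Grouping equal generators in any expression $n = \sum_{i=1}^{r} \eps_i 2^{a_i^2}$, and conversely splitting a coefficient into equal signed terms, shows that
$$\eta(n) = \min\Big\{\, \sum_{j \ge 0} |c_j| \ :\ n = \sum_{j\ge 0} c_j 2^{j^2},\ c_j \in \Z \,\Big\}.$$
It therefore suffices to prove that every integer representation $n_k = \sum_{j\ge 0} c_j 2^{j^2}$ has total weight $\sum_j |c_j| \ge k+1$.

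I would establish this by induction on $k$, with base case $k=0$, where $n_0 = 1$ and $\eta(1)=1$. For the inductive step I would first record the elementary size estimates $2^{k^2} < n_k < 2^{k^2+1}$ (a one-line induction) and $k < 2^{2k-1}$ (valid for all $k\ge 1$). Now suppose, for contradiction, that some representation has weight $W := \sum_j |c_j| \le k$, and let $J$ be the largest index with $c_J \ne 0$.

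The heart of the argument is to peel off the top power. Because the lower-order terms contribute at most $(W-|c_J|)\,2^{(J-1)^2}$ in absolute value, comparing magnitudes against $2^{k^2} < n_k < 2^{k^2+1}$ forces first $J = k$ and then $c_k = 1$. Granting this, subtraction yields $n_{k-1} = \sum_{j \le k-1} c_j 2^{j^2}$, a representation of $n_{k-1}$ of weight $\le k-1$; this contradicts the inductive hypothesis $\eta(n_{k-1}) = k$, and completes the proof.

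The main obstacle, and the only place where the specific exponents $j^2$ (as opposed to a merely linear sequence) are genuinely used, is excluding the two wasteful possibilities in the peeling step: that the representation employs a power $2^{j^2}$ with $j > k$, or that $|c_k| \ge 2$. In either case the remaining terms must reproduce a quantity of magnitude comparable to $2^{k^2}$ out of summands each of size at most $2^{(k-1)^2}$; since $2^{k^2}/2^{(k-1)^2} = 2^{2k-1}$ grows super-linearly in $k$, this would require on the order of $2^{2k-1}$ summands, far exceeding the budget $W \le k$. Making the two comparisons precise (through the inequalities $2^{2k+1} > k+1$ to kill a high power and $2^{2k-1} > k$ to kill a top coefficient $\ge 2$) is exactly what the rapid growth of the squares buys us, and it is the crux of the whole argument.
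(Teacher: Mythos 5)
Your proof is correct and follows essentially the same route as the paper's: induction on $k$, supposing a representation $n_k = \sum_j c_j 2^{j^2}$ of weight at most $k$, using the key estimate $k\,2^{(k-1)^2} < 2^{k^2}$ to force the top power present to be exactly $2^{k^2}$ with positive coefficient, and then subtracting one copy of $2^{k^2}$ to obtain a representation of $n_{k-1}$ of weight $< k$, contradicting the inductive hypothesis. The only cosmetic difference is that you pin down $c_k = 1$ exactly (using the extra upper bound $n_k < 2^{k^2+1}$), whereas the paper merely shows the top coefficient is positive before peeling it off, which suffices.
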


\begin{proof}
We proceed by induction on $k \in \N$, the base case being trivial. Take $k>1,$ and assume that the lemma holds for $k-1$. That $\eta(n_k) \leq k+1$ is clear from the definitions, and so it remains to show that $\eta(n_k) \geq k+1$. Observe that, for all $k \in \N$, we have 
\begin{equation} \label{star}
k2^{(k-1)^2} = \frac{k}{2^{2k-1}}2^{k^2} < 2^{k^2}. 
\end{equation}
Assume towards a contradiction that $\eta(n_k) < k+1$. Then we can write 
$n_k = \sum_{i=1}^p c_i 2^{a_i^2},$
for some $p \in \N, c_1, \ldots, c_p \in \Z \setminus \{ 0 \},$ and $a_1, \ldots, a_p \in \Z^+$ such that $\sum_{i=1}^p \vert c_i \vert < k+1$. We may suppose that $a_1 > a_2 > \cdots > a_p$. We first show that $a_1 =k$. If $a_1 \leq k-1$, we find that
$$n_k = \left \vert \sum _{i=1}^p c_i 2^{a_i^2} \right \vert \leq \left( \sum_{i=1}^p \vert c_i \vert \right)2^{(k-1)^2} \leq k2^{(k-1)^2} < n_k$$
by \eqref{star}, a contradiction. Similarly, if $a_1 \geq k+1,$ we have

\begin{align*}
n_k = \left \vert \sum _{i=1}^p c_i 2^{a_i^2} \right \vert &\geq \vert c_1 \vert 2^{{a_1}^2} - \left( \sum_{i=2}^p \vert c_i \vert \right)2^{(a_1-1)^2}
 \geq 2^{{a_1}^2}-(k -1)2^{({a_1}-1)^2} \\
&> a_1 2^{(a_1-1)^2}-(k-1)2^{(a_1-1)^2}\\
&= (a_1 +1 - k)2^{(a_1 - 1)^2} \geq 2 \cdot 2^{k^2} > n_k,
\end{align*}
where we have  used \eqref{star} to obtain the second line. Hence in either case we get a contradiction, so we must have $a_1 = k$, as claimed. 

Observe that $c_1 > 0$, since otherwise
\begin{align*}
\sum _{i=1}^p c_i 2^{a_i^2} \leq -2^{k^2} +\sum_{i=2}^p \vert c_i \vert 2^{a_i^2} \leq -2^{k^2} + (k-1)2^{(k-1)^2} <0. 
\end{align*}
Hence we have deduced that
$$2^{k^2} + n_{k-1} = n_k = 2^{k^2} + (c_1-1)2^{k^2} + \sum_{i=2}^p c_i 2^{a_i^2},$$
which implies that
$$n_{k-1} = (c_1 - 1)2^{k^2} + \sum_{i=2}^p c_i 2^{a_i^2},$$
and this contradicts the induction hypothesis, since $c_1-1 + \sum_{i=2}^p \vert c_i\vert < k$.
\end{proof}

\begin{lemma} \label{4.3}	
Let $j \in \N,$ and set $r = r(j) = 2^{2j+1}$. Then, for all $k_1, \ldots, k_r \geq j,$ we have
$$\left[ \Omega^{\, (r)} (n_{k_1}, \ldots, n_{k_r}) \right ]^{1/r} \leq {\rm e}^{-j}.$$
\end{lemma}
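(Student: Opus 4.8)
The plan is to convert the claim into an additive inequality for the word-length $\eta$ and then exploit the special arithmetic of $r=2^{2j+1}$. Writing $N=n_{k_1}+\cdots+n_{k_r}$ and recalling that $\omega={\rm e}^{\eta}$, we have $\Omega^{(r)}(n_{k_1},\ldots,n_{k_r})={\rm e}^{\eta(N)-\sum_{i=1}^r\eta(n_{k_i})}$, and by Lemma \ref{4.2} the exponent equals $\eta(N)-\sum_{i=1}^r(k_i+1)$. Taking $r$-th roots and logarithms, the desired bound $[\Omega^{(r)}]^{1/r}\le{\rm e}^{-j}$ is equivalent to $\eta(N)\le\sum_{i=1}^r(k_i+1)-jr$. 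Thus the whole problem reduces to a good upper bound on $\eta(N)$.

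First I would expand $N$ in the generators. Since $n_k=\sum_{m=0}^{k}2^{m^2}$, collecting terms gives $N=\sum_{m\ge 0}c_m2^{m^2}$, where $c_m=\#\{i:k_i\ge m\}$; note that $\sum_{m\ge 0}c_m=\sum_{i=1}^r(k_i+1)$. The key structural observation is that, because every $k_i\ge j$, we have $c_m=r$ for all $0\le m\le j$. Hence $N=r\,n_j+\sum_{m>j}c_m2^{m^2}$, and the target inequality rearranges to $\eta(N)\le r+\sum_{m>j}c_m$. Since $\eta$ is subadditive and $\eta(2^{m^2})=1$, one has $\eta\big(\sum_{m>j}c_m2^{m^2}\big)\le\sum_{m>j}c_m$, so it suffices to prove $\eta(r\,n_j)\le r$.

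The crux is the bound on $\eta(r\,n_j)$, and here the choice $r=2^{2j+1}$ is exactly what makes things work. For $0\le m\le j$ I would use the identity $m^2+2j+1=(m+1)^2+2(j-m)$, which gives
\[
r\cdot 2^{m^2}=2^{\,m^2+2j+1}=2^{(m+1)^2}\cdot 4^{\,j-m},
\]
exhibiting $r\cdot 2^{m^2}$ as a sum of $4^{\,j-m}$ copies of the single generator $2^{(m+1)^2}\in S_0$; in particular $\eta(r\cdot 2^{m^2})\le 4^{\,j-m}$. Summing over $m$ by subadditivity,
\[
\eta(r\,n_j)\le\sum_{m=0}^{j}4^{\,j-m}=\frac{4^{\,j+1}-1}{3}=\frac{2r-1}{3}<r,
\]
using $4^{\,j+1}=2^{2j+2}=2r$. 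This establishes $\eta(r\,n_j)\le r$ and hence the lemma (in fact with strict inequality).

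The step I expect to be the main obstacle — and the place where naive estimates fail — is precisely the passage from $\eta(N)$ to $\eta(r\,n_j)$: applying subadditivity termwise to all of $N$ only yields $\eta(N)\le\sum_m c_m=\sum_{i=1}^r(k_i+1)$, the trivial bound, because the low-order coefficients $c_m$ are as large as $r$. The resolution is the two-part manoeuvre above: isolate the block $m\le j$, where $c_m$ is forced to equal $r$ (this is the one place the hypothesis $k_i\ge j$ enters), and then convert the $r$ copies of each low generator $2^{m^2}$ into the geometrically few copies $4^{\,j-m}$ of the higher generator $2^{(m+1)^2}$, so that the total collapses to a geometric sum strictly below $r$.
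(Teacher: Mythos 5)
Your proof is correct and follows essentially the same route as the paper's: both reduce the claim, via Lemma \ref{4.2}, to the subadditive estimate $\eta(n_{k_1}+\cdots+n_{k_r})\le k_1+\cdots+k_r-(j-1)r$, decompose the sum as $r\,n_j$ plus the high-order tails $n_{k_i}-n_j$ (your coefficient bookkeeping with $c_m$ is the same decomposition in different notation), and both hinge on the identity $2^{2j+1}\cdot 2^{m^2}=4^{\,j-m}\cdot 2^{(m+1)^2}$ to get $\eta(r\,n_j)\le\tfrac{1}{3}(2^{2j+2}-1)\le r$.
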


\begin{proof}
First of all, we compute
\begin{align*}
r(j)n_j &= 2^{2j+1} \sum_{i=0}^j 2^{(j-i)^2} = \sum_{i=0}^j 2^{2j+1+j^2-2ij+i^2} \\ 
&= \sum_{i=0}^j 2^{2i} 2^{-2i+2j - 2ij+j^2 +1+i^2}
=\sum_{i=0}^{j} 2^{2i}\cdot 2^{(j+1-i)^2}.
\end{align*}
This implies that
$$\eta(rn_j) \leq \sum_{i=0}^{j} 2^{2i} = \frac{1}{3}(2^{2j} - 1) \leq r,$$
so that, for all $k_1, \ldots, k_r \geq j$, we have
\begin{align*}
\eta(n_{k_1} + \cdots + n_{k_r}) &= \eta[(n_{k_1} - n_j) + \cdots + (n_{k_r} - n_j) +rn_j] \\
&\leq \eta(n_{k_1} - n_j) + \cdots +\eta(n_{k_r} - n_j) +\eta(rn_j) \\
&\leq (k_1 - j)+ \cdots + (k_r-j) + r \\
&= k_1+ \cdots + k_r -(j-1)r.
\end{align*}
Hence, by Lemma \ref{4.2}, we have
\begin{align*}
\left[ \Omega^{(r)}(n_{k_r}, \ldots, n_{k_1}) \right]^{1/r} &\leq \left[ \frac{{\rm e}^{k_1+ \cdots + k_r-(j-1)r}}{{\rm e}^{k_1+1} \cdots {\rm e}^{k_r+1}} \right]^{1/r} \\
&= [{\rm e}^{-(j-1)r-r}]^{1/r} = {\rm e}^{-j},
\end{align*}
as required.
\end{proof}

\begin{lemma} \label{4.4}	
Fix $j \in \N$, and set $r = 2^{2j+1}.$ Let $J \in \N$ satisfy $J \geq j$ and  
\begin{equation*}  
 2^{2k-1}>rk+2r \quad (k \geq J).
\end{equation*}
Then, for all $k_1 \geq k_2 \geq \cdots \geq k_r  \geq J,$ we have
$$\eta(n_{k_1}+ \cdots +n_{k_r}) \geq k_1+k_2+ \cdots +k_r - rJ.$$
\end{lemma}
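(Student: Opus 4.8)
The plan is to work directly with the word length $\eta$ and exploit the super-increasing nature of the generating set $S_0$. Since $n_k = \sum_{l=0}^{k} 2^{l^2}$, collecting powers lets me rewrite the sum in its natural base representation
$$N := n_{k_1} + \cdots + n_{k_r} = \sum_{l \ge 0} M_l\, 2^{l^2}, \qquad M_l = \#\{\, m : k_m \ge l \,\},$$
so that $(M_l)$ is non-increasing, $M_l = r$ for $l \le J$, and $1 \le M_l \le r$ for $J < l \le k_1$. The quantity to be bounded below, namely $k_1 + \cdots + k_r - rJ$, is exactly $\sum_{l > J} M_l$. From the hypothesis $2^{2k-1} > rk + 2r$ (for $k \ge J$) I would first record the separation estimates that drive everything: each high power dominates $r$ times the sum of all lower powers, whence $N < 2^{(k_1+1)^2}$, the ``low part'' $r\,n_J < 2^{(J+1)^2}$, and $r(k_1+1) < 2^{2k_1-1}$. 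I would also note the trivial upper bound $\eta(N) \le \sum_m (k_m+1) \le r(k_1+1)$ coming from the natural representation itself.

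Next I fix a minimal-weight representation $N = \sum_l d_l 2^{l^2}$ with distinct exponents, so that $\sum_l |d_l| = \eta(N) \le r(k_1+1)$. The first step is to show no exponent exceeds $k_1$: if the top exponent were $a^\ast \ge k_1+1$, comparing $|d_{a^\ast}|2^{a^{\ast 2}}$ with $N < 2^{(k_1+1)^2}$ and the tail bound $\sum_{m<a^\ast}|d_m|2^{m^2} \le \eta(N)\,2^{(a^\ast-1)^2}$ first forces $|d_{a^\ast}| = 1$, and then the sharp estimate $N \le 2r\,2^{k_1^2}$ forces a weight exceeding $2^{2k_1+1} - 2r > r(k_1+1)$, a contradiction. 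This is where the explicit margin in the hypothesis is essential.

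The main step is a peeling induction on $\max_m k_m$. If $k_1 = J$ the target is $0$ and there is nothing to prove. Otherwise let $t = M_{k_1} \ge 1$ be the top natural coefficient and write $N = t\,2^{k_1^2} + N'$, where $N'$ is the analogous sum for the multiset obtained by lowering the $t$ maximal indices from $k_1$ to $k_1 - 1$; its target is $\sum_{l>J} M_l - t$. Using $N' < 2^{k_1^2}$ together with $|\sum_{m<k_1} d_m 2^{m^2}| < 2^{k_1^2}$, one obtains $|d_{k_1} - t| < 2$. If $d_{k_1} \ne t$, then the terms below $k_1$ must reconstruct something of size $\approx 2^{k_1^2}$, forcing $\sum_{m<k_1}|d_m| > 2^{2k_1-1} - 2r > rk_1 \ge$ (target), so the desired inequality holds outright. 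If $d_{k_1} = t$, then $(d_m)_{m<k_1}$ is a representation of $N'$ of weight $\eta(N) - t$, whence $\eta(N) \ge t + \eta(N')$, and the induction hypothesis applied to $N'$ completes the count.

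The point I expect to be the main obstacle — and the reason the precise hypothesis $2^{2k-1} > rk + 2r$ appears — is the control of signed cancellation: a priori, negative coefficients could conspire to represent $N$ far more cheaply than the obvious representation does. The resolution is that the growth rate $2^{l^2}/2^{(l-1)^2} = 2^{2l-1}$ of the generators, for $l \ge J$, strictly exceeds the total coefficient weight $r(k_1+1)$ that any near-optimal representation can carry. This both confines the representation to exponents $\le k_1$ and makes every deviation of the top coefficient from its natural value $t$ prohibitively expensive, so that the ``bad'' cases $d_{k_1} \ne t$ need not be excluded at all but simply yield the bound for free; the remaining work is routine bookkeeping.
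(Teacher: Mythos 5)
Your proof is correct and is essentially the paper's own argument: both proceed by induction on the maximal index $k_1$, use the gap hypothesis $2^{2k-1}>rk+2r$ to confine a minimal-weight representation of $n_{k_1}+\cdots+n_{k_r}$ to exponents at most $k_1$ and to pin its top coefficient against the multiplicity $t$ of $k_1$, and then peel off $t\cdot 2^{k_1^2}$ so that the induction hypothesis applies to the lowered multiset $\{k_1-1\ (\times t),k_{t+1},\ldots,k_r\}$. The only difference is organizational: the paper runs the argument by contradiction, bounding the tail by the assumed-too-small weight and proving $c_1\geq t$, whereas you argue directly from the unconditional bound $\eta(N)\leq r(k_1+1)$ and absorb the case $d_{k_1}\neq t$ by noting that it already forces a weight exceeding the target.
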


\begin{proof}
Note that, by our hypothesis on $J$, whenever $k \geq J$ we have
$2^{k^2 - (k-1)^2} > rk+2r$, which implies that
\begin{equation} \label{cond2}
2^{k^2} > rk2^{(k-1)^2}+r2^{(k-1)^2+1} \quad (k \geq J).
\end{equation}
We proceed by induction on $k_1 \geq J$, with the base case corresponding to the case where $k_1 = J$, and hence also $k_2 = \cdots = k_r = J$. Therefore the base hypothesis merely states that $\eta(rn_J) \geq 0$, which is true.

Suppose that $k_1 > J,$ and assume that the lemma holds for all smaller values of $k_1 \geq J$. Assume towards a contradiction that there exist $k_2, \ldots, k_r \in \Z$ such that $k_1 \geq k_2 \geq \cdots \geq k_r \geq J$, and such that
$$\eta(n_{k_1} + \cdots + n_{k_r}) < k_1+ \cdots + k_r - rJ.$$
Then we may write 
$$n_{k_1} + \cdots + n_{k_r} = \sum_{i=1}^p c_i 2^{a_i^2}$$
for some $p \in \N$, some $a_1, \ldots, a_p \in \Z^+,$ and some $c_1, \ldots, c_p \in \Z \setminus \{ 0 \}$,
satisfying $a_1>a_2> \cdots >a_p$ and $\sum_{i=1}^p\vert c_i \vert < k_1+\cdots +k_r -rJ$.
Note that we have
\begin{equation} \label{cond4}
\left \vert \sum_{i=2}^p c_i 2^{a_i^2} \right \vert < (k_1+\cdots +k_r -rJ)2^{a_2^2} \leq k_1r2^{(a_1 - 1)^2}.
\end{equation}

We claim that $a_1 = k_1$. Assume instead that $a_1 \geq k_1+1$. 
Then, using \eqref{cond2} and \eqref{cond4}, we have
\begin{align*} 
\left \vert \sum_{i=1}^p c_i 2^{a_i^2} \right \vert &> 
\vert c_1 \vert 2^{a_1^2} - \left \vert \sum_{i=2}^p c_i2^{a_i^2} \right \vert \\ 
&\geq 2^{a_1^2} - rk_1 2^{(a_1 - 1)^2} 
> r2^{(a_1 -1)^2+1} \\
&\geq r2^{k_1^2+1} \geq n_{k_1}+ \cdots +n_{k_r},
\end{align*}
a contradiction. If, on the other hand, we assume that $a_1 \leq  k_1 - 1$, then \eqref{cond2} implies that
\begin{align*}
\left \vert \sum_{i=1}^p c_i 2^{a_i^2}  \right \vert \leq (k_1+ \cdots + k_r)2^{(k_1-1)^2} \leq rk_1 2^{(k_1-1)^2} < n_{k_1},
\end{align*}
a contradiction. Hence $a_1 = k_1$, as claimed.

Let $d \in \N$ be maximal such that $k_d = k_1$. We claim that $c_1 \geq d$. Firstly, if $c_1$ were negative, we would have
$$\sum_{i=1}^p c_i 2^{a_i^2} \leq -2^{k_1^2} + \left \vert \sum_{i=2}^p c_i 2^{a_i^2} \right \vert 
\leq -2^{k_1^2} + rk_12^{(k_1-1)^2} <0,$$
by \eqref{cond2}, a contradiction. Hence $c_1$ must be positive. Suppose that $c_1 \leq d-1$. Then, using \eqref{cond2} to obtain the second line, we would have
\begin{align*}
\sum_{i=1}^p c_i 2^{a_i^2} &\leq (d-1)2^{k_1^2} +  \left \vert \sum_{i=2}^p c_i 2^{a_i^2} \right \vert 
\leq (d-1)2^{k_1^2} +rk_12^{(k_1-1)^2} \\
&< n_{k_1} + \cdots + n_{k_d} \leq n_{k_1} + \cdots  + n_{k_r},
\end{align*}
again a contradiction. Hence we must  have $c_1 \geq d$, as claimed.

We now complete the  proof. We have
\begin{align*}
c_12^{k_1^2}+\sum_{i=2}^p c_i 2^{a_i^2} &= n_{k_1}+ \cdots +n_{k_r} \\
&= dn_{k_1}+n_{k_{d+1}}+ \cdots +n_{k_r} \\
 &= d2^{k_1^2}+n_{k_1-1}+ \cdots + n_{k_d-1}+n_{k_{d+1}} + \cdots +n_{k_r},
\end{align*}
which implies that
$$(c_1 - d)2^{k_1^2} + \sum_{i=2}^p c_i 2^{a_i^2} = n_{k_1-1}+ \cdots + n_{k_d-1}+n_{k_{d+1}} + \cdots +n_{k_r}.$$
But
$$ (c_1-d) + \sum_{i=2}^p \vert c_i \vert < (k_1 - 1)+ \cdots +(k_d-1)+ k_{d+1}+ \cdots +k_r - Jr,$$
which contradicts the induction hypothesis applied to $k_1-1$.
\end{proof}

\begin{corollary} \label{4.5}	
Fix $j \in \N$, set $r = 2^{2j+1}$, and let $\mathcal{U}$ be an ultrafilter on $\N$. Then $\lim_{\underline{k} \rightarrow \mathcal{U}} \Omega^{(r)}(k_1, \ldots, k_r) >0.$ 
\end{corollary}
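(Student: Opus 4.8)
The plan is to read off the corollary from Lemma \ref{4.4}, which provides precisely the lower bound complementing the upper bound of Lemma \ref{4.3}. The first observation is that $\Omega^{(r)}$ depends on its arguments only through the single sum appearing in its numerator and through the product of the individual weights in its denominator; consequently the function $(k_1, \ldots, k_r) \mapsto \Omega^{(r)}(n_{k_1}, \ldots, n_{k_r})$ is symmetric in the indices $k_1, \ldots, k_r$. When establishing any bound I may therefore permute the indices so as to assume $k_1 \geq k_2 \geq \cdots \geq k_r$, which is exactly the configuration in which Lemma \ref{4.4} is stated.

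Next I would translate the additive estimate of Lemma \ref{4.4} into a multiplicative lower bound on $\Omega^{(r)}$. Since $\omega(n) = {\rm e}^{\eta(n)}$ and $\eta(n_{k_i}) = k_i + 1$ by Lemma \ref{4.2}, we have
$$\Omega^{(r)}(n_{k_1}, \ldots, n_{k_r}) = \exp\!\left( \eta(n_{k_1} + \cdots + n_{k_r}) - \sum_{i=1}^{r}(k_i + 1) \right).$$
Fix $J \in \N$ as in Lemma \ref{4.4}; such a $J$ exists because $2^{2k-1}$ eventually dominates $rk + 2r$, and it depends only on $j$. The lemma then yields $\eta(n_{k_1} + \cdots + n_{k_r}) \geq k_1 + \cdots + k_r - rJ$ whenever $k_1, \ldots, k_r \geq J$, whence
$$\Omega^{(r)}(n_{k_1}, \ldots, n_{k_r}) \geq {\rm e}^{-rJ - r} = {\rm e}^{-r(J+1)} \quad (k_1, \ldots, k_r \geq J).$$
This is a strictly positive lower bound, uniform over the set $\{ (k_1, \ldots, k_r) : k_i \geq J \text{ for all } i \}$.

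Finally I would pass to the repeated ultrafilter limit. If $\mathcal{U}$ is free then $\{ k \in \N : k \geq J \} \in \mathcal{U}$, as its complement is finite. I would then argue by peeling off the limits from the innermost outwards: taking $\lim_{k_r \to \mathcal{U}}$ of the displayed bound shows that the resulting function of $(k_1, \ldots, k_{r-1})$ is still bounded below by ${\rm e}^{-r(J+1)}$ on $\{ k_i \geq J \}$, and iterating this through all $r$ limits (each of which is taken over a set lying in $\mathcal{U}$) preserves the bound, so the full repeated limit is at least ${\rm e}^{-r(J+1)} > 0$. If instead $\mathcal{U}$ is principal, say concentrated at $m$, the repeated limit collapses to the single value $\Omega^{(r)}(n_m, \ldots, n_m)$, which is positive since $\Omega^{(r)}$ takes values in $(0,1]$.

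The step demanding the most care is this last one: verifying that the uniform lower bound survives the alternating repeated limit. The clean way to do this is the inner-to-outer induction just sketched, exploiting the fact that along a free ultrafilter each individual limit ignores a finite set and so only ever samples indices in a $\mathcal{U}$-large set. By contrast, the symmetry reduction and the passage from the $\eta$-estimate to the bound on $\Omega^{(r)}$ are entirely routine once Lemma \ref{4.4} is available, so the mathematical content of the corollary is essentially contained in that lemma.
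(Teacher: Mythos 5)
Your proof is correct and takes essentially the same approach as the paper: both take $J$ from Lemma \ref{4.4}, combine it with Lemma \ref{4.2} to obtain the uniform positive lower bound $\Omega^{(r)}(n_{k_1}, \ldots, n_{k_r}) \geq {\rm e}^{-r(J+1)}$ for all $k_1, \ldots, k_r \geq J$, and then pass to the repeated ultrafilter limit. The paper compresses the final step into ``This implies the result,'' whereas you spell out the symmetry reduction needed to apply Lemma \ref{4.4}, the inner-to-outer preservation of the bound along a free ultrafilter, and the trivial principal case --- added detail, not a different argument.
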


\begin{proof}
Let $J$ be as in Lemma \ref{4.4}. Then, for all $k_1, \ldots, k_r \geq J,$ we have
$$\eta(n_{k_1}+ \cdots + n_{k_r}) \geq k_1+ \cdots + k_r -rJ,$$
which, when combined with Lemma \ref{4.2}, implies that
\begin{align*}
\Omega^{(r)}(n_{k_1}, \ldots, n_{k_r}) \geq \frac{{\rm e}^{k_1+ \cdots + k_r - rJ}}{{\rm e}^{k_1+1}\cdots {\rm e}^{k_r+1}} = {\rm e}^{-r(J+1)} > 0.
\end{align*}
This implies the result.
\end{proof}

We can now prove Theorem \ref{1.2}.

\begin{proof}[Proof of Theorem \ref{1.2}]
Let $\Phi$ be a weak-* accumulation point of 
$$\{ \delta_{n_k}/\omega(n_k) : k \in \N \}.$$
Then, by Proposition \ref{4.1} and Lemma \ref{4.3}, $\Phi \in  \rad(A_\omega'')$. 

Take $j \in \N$ and set $r = 2^{2j+1}$. Let $\mathcal{U}$ be a filter on $\N$ such that $\Phi$ is equal to the weak-* limit $\lim_{k \rightarrow \mathcal{U}} \delta_{n_k}/\omega(n_k)$. Then, by Corollary \ref{4.5},
\begin{align*}
\langle \Phi^{\Box r}, \omega \rangle &=\lim_{\underline{k} \rightarrow \mathcal{U}} {}^{(r)} \left \langle \frac{1}{\omega(n_{k_1}) \cdots \omega(n_{k_r})} \delta_{n_{k_1}+ \cdots + n_{k_r}}, \omega \right \rangle \\
&= \lim_{\underline{k} \rightarrow \mathcal{U}} {}^{(r)} \Omega^{(r)}(n_{k_1}, \ldots, n_{k_r}) > 0.
\end{align*}
Hence $\Phi^{\Box r} \neq 0$. Since $r \rightarrow \infty$ as $j \rightarrow \infty$, it follows that $\Phi$ is not nilpotent.
\end{proof}

\section{Open Problems}
\noindent
There are many natural problems that come to mind related to the topic of this paper to which we do not know the answers. 
In the light of Theorem \ref{1.3}, it would be interesting to ask whether or not there exists any locally compact group $G$ for which $\rad(L^{\, 1}(G)'')^{\Box 2} = \{ 0 \}$. We do not know of such a group, and we are unable to say whether or not this happens for $G= \Z$.

The question of whether the second dual of a Beurling algebra can ever be semisimple remains open. Moreover, even when the weight is trivial, the fact that this cannot occur has only been established for non-discrete groups, and discrete amenable groups. It would seem that determining whether or not, for example, $\rad(\ell^{\, 1}(F_2)'') \neq \{ 0 \},$ where $F_2$ denotes the free group on two generators, requires some totally new ideas.

Another natural setting for this type of question is the Fourier algebra of a locally compact group  $A(G)$, and one could ask whether we always have $\rad(A(G)'') \neq \{ 0 \}$. Questions about Arens multiplication in $A(G)''$ are actively studied: see, for example, \cite{Los, FNS}. When $G$ is abelian $A(G) \cong L^{\, 1}(\widehat{G})$, so that in this case $\rad(A(G)'') \neq \{ 0 \}$ by the known results mentioned above.

\subsection*{Acknowledgements}
\noindent
I would like to thank my doctoral supervisors Garth Dales and Niels Laustsen for their invaluable feedback on earlier versions of the present work. This research was conducted whilst I was studying for a PhD funded by the EPSRC. I would also like to thank the referee for his/her nice suggestion that a section discussing open problems be included.


\begin{thebibliography}{99}

\bibitem{A1} R.\ Arens, The adjoint of a bilinear operation, \emph{Proc. Amer. Math. Soc.}, \textbf{2} (1951), 839--848.

\bibitem{A2} R.\ Arens, Operations induced in function classes, \emph{Monatsh. Math.} \textbf{55} (1951), 1--19.

\bibitem{CY} P.\ Civin and B.\ Yood, The second conjugate space of a Banach algebra as an algebra, \emph{Pacific J. Math.}, \textbf{11} (1961), 847--870.

\bibitem{D} H.\ G.\ Dales, \emph{Banach algebras and automatic continuity}, London Math. Soc. Monographs, Volume 24, Clarendon Press, Oxford, 2000

\bibitem{DL} H.\ G.\ Dales and A.\ T-M.\ Lau, \emph{The second duals of Beurling algebras}, Mem. Amer. Math. Soc., Volume 177, 2005.

\bibitem{DR} M.\ Daws and C.\ Read, Semisimplicity of $\B(E)''$, \emph{J. Funct. Anal.} \textbf{219} (2005), 177--204.

\bibitem{Los} V.\ Losert, The centre of the bidual of Fourier algebras (discrete groups), \emph{Bull. Lond. Math. Soc.} \textbf{48} (2016), 968--976

\bibitem{FNS} M.\ Filali, M.\ Neufang, and M.\ Sangani, On ideals in the bidual of the Fourier algebra and related algebras, \emph{J. Funct. Anal.} \textbf{258} (2010), 3117--3133.

\bibitem{Gr69} S.\ Grabiner, The nilpotency of Banach nil algebras, \emph{Proc. Amer. Math. Soc}, \textbf{21} (1969), 5

\bibitem{G63} E.\ Granirer, On amenable semigroups with a finite-dimensional set of invariant means I, \emph{Illinois J. Math.}, \textbf{7} (1963), 32--48.

\bibitem{G73} E.\ Granirer, The radical of $L^\infty(G)^*$, \emph{Proc. Amer. Math. Soc.} \textbf{41} (1973), 321--324.

\bibitem{P}  A.\ L.\ T.\ Paterson,  \emph{Amenability}, Mathematical Surveys and Monographs, 29. Amer. Math. Soc., Providence, RI, 1988.

\bibitem{W} J.\ T.\ White, Finitely-generated left ideals in Banach algebras on groups and semigroups, \emph{Studia Math.} \textbf{239} (2017), 67--99.

\end{thebibliography}
\end{document}